\documentclass[12pt]{amsart}
\usepackage{amsmath}
\usepackage{amssymb}
\usepackage{latexsym}

\newcommand{\C}{\mathbb {C}}     
\newcommand{\halmos}{\rule{5pt}{5pt}}

\numberwithin{equation}{section}

\newtheorem{prop}{\bf Proposition}[section]
\newtheorem{thm}[prop]{\bf Theorem}

\theoremstyle{definition}

\begin{document}
\title[Reducibility of symmetry on $q$-Painlev\'e equations]
{Reducibility of symmetry on $q$-Painlev\'e equations}
\author{Chihiro Sato}
\address{Department of Mathematics, Ochanomizu University, 2-1-1 Otsuka, Bunkyo-ku, Tokyo 112-8610, Japan}
\author{Kouichi Takemura}
\address{Department of Mathematics, Ochanomizu University, 2-1-1 Otsuka, Bunkyo-ku, Tokyo 112-8610, Japan}
\email{takemura.kouichi@ocha.ac.jp}
\author{Ohka Yamashita}
\address{Department of Mathematics, Ochanomizu University, 2-1-1 Otsuka, Bunkyo-ku, Tokyo 112-8610, Japan}
\subjclass[2020]{33E17,39A13}
\keywords{$q$-Painlev\'e equation, affine Weyl group, extended affine Weyl group, parallel translation}
\begin{abstract}
It is known that $q$-Painlev\'e equations admit the symmetry of the extended affine Weyl groups.
Kajiwara, Noumi and Yamada developed a unified description of the symmetry by introducing representations of the extended affine Weyl groups.
On the other hand, a parallel translation associated with the extended affine Weyl groups describes the time evolution in Sakai's theory of discrete Painlev\'e equations.
In this paper, we introduce the equivalences to the representations of Kajiwara, Noumi and Yamada.
The equivalences imply reducibility of the representations, and they are related to the expressions of the parallel translations of the $q$-Painlev\'e equations.
\end{abstract}
\maketitle

\section{Introduction}

Painlev\'e equations are nonlinear second-order ordinary differential equations whose solutions have the Painlev\'e property such that movable singularities are only poles.
They sometimes appear in the context of mathematical physics.
Several members of the discrete analogue of the Painlev\'e equation had been discovered individually in the 1990's, and a comprehensive list of the second order discrete Painlev\'e equations was provided by Sakai \cite{Sak}.
Each member of the $q$-difference Painlev\'e equation was labelled by the affine Dynkin diagram from the initial value space and the symmetry, and the symmetry was used to characterize the time evolution of the $q$-Painlev\'e equation, which follows from the geometric realization of Sakai \cite{Sak}.
Several novel results on discrete Painlev\'e equations were obtained by authors in \cite{HDC20,DFS20,DFS22,HDC25}, which are related to special functions and to the initial value spaces.
As an application, they clarified an interesting connection between orthogonal polynomials with a certain deformation of the $q$-Laguerre weight and a member of the $q$-Painlev\'e equations \cite{HDC25}.

A systematic description of the extended affine Weyl group symmetry for each member of discrete Painlev\'e equations was provided in the survey paper \cite{KNY} by Kajiwara, Noumi and Yamada, and it was based on the degenerations from the $E^{(1)}_8$ case.
On the other hand, Yamada and his collaborators developed the description of the discrete Painlev\'e equations in terms of the Lax pairs \cite{Ye,Y,NTY,KNY}.
However, some adjustment would be necessary to fit the time evolution from the Lax pair with the one from the extended affine Weyl group symmetry.
In \cite{TsqP}, the expressions of the time evolution of the $q$-Painlev\'e equations $q$-$P(D^{(1)}_5)$, $q$-$P(E^{(1)}_6)$ and $q$-$P(E^{(1)}_7)$ in terms of the extended affine Weyl group symmetry were reviewed, and the differences between the time evolution by the Lax pair and the one by the extended affine Weyl group symmetry were calculated.
Then, the differences were interpreted by the symmetry of the gauge transformation and the dilation on the linear $q$-difference equation associated to the Lax operator.

In this paper, we investigate the reducibility of the representations of the extended affine Weyl group associated with the $q$-Painlev\'e equations $q$-$P(D^{(1)}_5)$, $q$-$P(E^{(1)}_6)$, $q$-$P(E^{(1)}_7)$ and $q$-$P(E^{(1)}_8)$, which were introduced in \cite{KNY}.
We define an equivalence on the variables on each representation and we show compatibility between the actions on the representation and the equivalence, which indicates reducibility of the representations.
Then, the realization of the time evolution of the $q$-Painlev\'e equation agrees with a composition of the generators of the extended affine Weyl group which corresponds to the parallel translation up to the equivalence for the each case of $q$-$P(D^{(1)}_5)$, $q$-$P(E^{(1)}_6)$, $q$-$P(E^{(1)}_7)$ and $q$-$P(E^{(1)}_8)$.
In other words, the equivalence describes a feature of the reducibility, the expression of the time evolution of the $q$-Painlev\'e equation is affected by the reducibility, and the equivalence recovers the compatibility with the parallel translation of the extended affine Weyl group.

This paper is organized as follows.
In Section \ref{sec:prel}, we review the representations of the extended affine Weyl groups associated with the $q$-Painlev\'e equations $q$-$P(E^{(1)}_6)$, $q$-$P(E^{(1)}_7)$, $q$-$P(D^{(1)}_5)$ and $q$-$P(E^{(1)}_8)$ and we describe realizations of parallel translations for them.
In Section \ref{sec:equiv}, we introduce the equivalences for particular representations of the extended affine Weyl groups, and we obtain compatibility among the actions on the representation, the equivalence and the time evolution.
Note that there is a typo in \cite[Theorem 3.1]{TsqP}, and we correct it in Section \ref{sec:equiv} and in the appendix.

\section{$q$-Painlev\'e equations and extended affine Weyl groups} \label{sec:prel}
We recall the $q$-Painlev\'e equations $q$-$P(E^{(1)}_6)$, $q$-$P(E^{(1)}_7)$, $q$-$P(D^{(1)}_5)$, $q$-$P(E^{(1)}_8)$ and the representations of the extended affine Weyl groups, which was introduced systematically in the paper \cite{KNY} by Kajiwara, Noumi and Yamada.
The representations are defined on the rational functions with the variables $\nu_{1}, \nu_{2}, \dots , \nu_{8} , \kappa _1 , \kappa _2, q, f,g $ with the relation
\begin{equation}
\kappa _1 ^2 \kappa _2^2 = q \nu_{1}\nu_{2}\nu_{3}\nu_{4} \nu_{5}\nu_{6}\nu_{7}\nu_{8} .
\end{equation}  
The principal variables of the $q$-Painlev\'e equations are $f$ and $g$, and the variables $\nu_{1}, \nu_{2}, \dots , \nu_{8} , \kappa _1 , \kappa _2$ are supplementary.
Let $\overline{a} $ (resp. $\underline{a}$) be the consequence of the time evolution $t \mapsto qt$ (resp. the inverse time evolution $t \mapsto t/q$) to the variable $a$.
The time evolution for the parameters $\kappa _1 , \kappa _2 , \nu_{1}, \nu_{2}, \dots , \nu_{8} ,  q$ is given by 
\begin{align}
& ( \overline{\kappa _1} , \overline{\kappa _2} , \overline{\nu_{1}} , \overline{\nu_{2}} , \dots , \overline{\nu_{8}} ,  \overline{q} ) =  ( \kappa _1 /q , q \kappa _2 , \nu_{1}, \nu_{2}, \dots , \nu_{8} ,  q )  . 
\label{eq:Tolinenu}
\end{align}
The time evolution for the parameters $f$, $g$ are determined individually for each $q$-Painlev\'e equation, and we describe it for several $q$-Painlev\'e equations in Sections \ref{sec:WE6}--\ref{sec:WE8}.

\subsection{} \label{sec:WE6}
The $q$-Painlev\'e equation of type $E^{(1)}_{6} $ ($q$-$P(E^{(1)}_{6})$) given in \cite{KNY} is written as
\begin{align}
 & \frac{(fg-1)(\overline{f}g-1)}{f\overline{f}}=\frac{(g- 1 / \nu_{1} ) (g- 1 / \nu_{2} ) (g- 1 / \nu_{3} ) (g- 1 / \nu_{4} )}{ (g- \nu_{5}/\kappa_{2} ) (g- \nu_{6}/\kappa_{2} )}, \label{eq:qPE16} \\
 & \frac{(fg-1)(f\underline{g}-1)}{g\underline{g}}=\frac{(f-\nu_{1}) (f-\nu_{2}) (f-\nu_{3}) (f-\nu_{4})}{ (f- \kappa_{1}/\nu_{7} )(f- \kappa_{1}/\nu_{8} )} . \nonumber
\end{align}
Let $\widetilde{W}(E^{(1)}_{6})$ be the extended affine Weyl group, which is generated by $s_0$, $s_1$, $s_2$, $s_3$, $s_4$, $s_5$, $s_6$, $\pi _1$, $\pi _2$ and the defining relations are 
\begin{align}
& \pi _1^2 = \pi_2^2 = (\pi _1 \pi _2)^3 = 1, \quad  s_{i}^{2} = 1 , \quad i = 0, \dots , 6 , \\
& s_{i}s_{j}s_i = s_j s_i s_j , \quad  \{ i,j \} = \{ 1,2\} , \{ 2,3 \}, \{ 3,4 \} , \{ 4,5 \} , \{ 3,6 \}, \{ 0,6 \}, \nonumber \\
& s_{i}s_{j} = s_{j}s_{i} , \quad  \mbox{ otherwise. }  \nonumber \\
& \pi _1 s_1 = s_5 \pi _1, \quad \pi _1 s_2 = s_4 \pi _1, \quad \pi _1 s_j = s_j \pi _1, \quad j=0,3,6,\nonumber \\
&  \pi _2 s_0 = s_1 \pi _2, \quad \pi _2 s_2 = s_6 \pi _2 , \quad \pi _2 s_j = s_j \pi _2, \quad j=3,4,5 . \nonumber 
\end{align}
Note that the Dynkin diagram of type $E^{(1)}_{6} $ is described as follows.
\\
\begin{picture}(0,120)(0,-10)
\put(20,20){\circle{10}}
\qbezier(25,20)(40,20)(55,20)
\put(60,20){\circle{10}}
\qbezier(65,20)(80,20)(95,20)
\put(100,20){\circle{10}}
\qbezier(105,20)(120,20)(135,20)
\put(140,20){\circle{10}}
\qbezier(145,20)(160,20)(175,20)
\put(180,20){\circle{10}}
\qbezier(100,25)(100,40)(100,55)
\put(100,60){\circle{10}}
\qbezier(100,65)(100,80)(100,95)
\put(100,100){\circle{10}}
\put(25,5){$1$}
\put(65,5){$2$}
\put(105,5){$3$}
\put(145,5){$4$}
\put(185,5){$5$}
\put(109,55){$6$}
\put(109,95){$0$}
\end{picture}

The Weyl group symmetry of the $q$-Painlev\'e equation $q$-$P(E^{(1)}_6 )$ is given by the following action of the operators $s_0, \dots ,s_6$, $\pi _1$ and $\pi _2$ for the parameters $(\kappa_{1}, \kappa_{2}, \nu_{1}, \dots , \nu_{8}) $ and $(f, g) $ in \cite{KNY}.
\begin{align}
s_0 : & \: \nu_7 \leftrightarrow \nu_8 , \quad s_1 : \: \nu_5 \leftrightarrow \nu_6  , \quad s_3 : \: \nu_1 \leftrightarrow \nu_2 , \quad s_4 : \: \nu_2 \leftrightarrow \nu_3 , \quad s_5 : \: \nu_3 \leftrightarrow \nu_4 , \label{eq:AffineWeylActionE6} \\
 s_2 : & \: \nu_1 \rightarrow\frac{\kappa_2}{\nu_6}, \;
 \nu_6 \rightarrow \frac{\kappa_2}{\nu_1}, \;
 \kappa_1 \rightarrow \frac{\kappa_1\kappa_2}{\nu_1\nu_6}, \;
 f \rightarrow f\, \frac{\kappa_2 ( \nu_1 g-1)  }{ (\nu _1 \nu_6 - \kappa_2  )fg + \nu _1 \kappa_2 g - \nu _1 \nu_6 } ,\nonumber \\
 s_6 : & \: \nu_1 \rightarrow\frac{\kappa_1}{\nu_7}, \; 
 \nu_7 \rightarrow \frac{\kappa_1}{\nu_1}, \;
 \kappa_2 \rightarrow \frac{\kappa_1\kappa_2}{\nu_1\nu_7}, \;
  g \rightarrow g\, \frac{\nu _7 ( \nu_1 - f )  }{ \kappa_1  -  \nu _7 f + ( \nu _1 \nu_7 - \kappa_1 )fg } , \nonumber \\
  \pi _1 : & \: q \rightarrow 1/q, \: \nu_1 \rightarrow \nu_2 / \kappa_2 , \:  \nu_2 \rightarrow \nu_1 / \kappa_2 ,  \: \nu_3 \rightarrow 1/\nu_6,  \: \nu_4 \rightarrow 1/\nu_5, \: \nu_5 \rightarrow 1/\nu_4,  \nonumber \\
& \: \nu_6 \rightarrow 1/\nu_3, \: \nu_7 \rightarrow 1/\nu_7, \: \nu_8 \rightarrow 1/\nu_8, \: \kappa_1 \rightarrow \nu_1 \nu_2 /(\kappa_1 \kappa_2 ) , \: \kappa_2 \rightarrow 1/\kappa_2 ,\nonumber \\
& \: f \rightarrow \frac{\nu _1 \nu_2 ( 1- fg )}{ \kappa_2 \{ \nu _1 \nu_2 g + f- (\nu _1 + \nu _2 ) fg \} } , \: g \rightarrow \kappa_2 g , \nonumber \\
  \pi _2 : & \: q \rightarrow 1/q, \: \nu_1 \rightarrow 1/\nu_1, \:  \nu_2 \rightarrow 1/\nu_2,  \: \nu_3 \rightarrow 1/\nu_3,  \: \nu_4 \rightarrow 1/\nu_4, \: \nu_5 \rightarrow 1/\nu_8, \nonumber \\
& \: \nu_6 \rightarrow 1/\nu_7, \: \nu_7 \rightarrow 1/\nu_6, \: \nu_8 \rightarrow 1/\nu_5, \: \kappa_1 \rightarrow 1/\kappa_2 , \: \kappa_2 \rightarrow 1/\kappa_1 , \: f \leftrightarrow  g . \nonumber 
\end{align}
The omitted variables are invariant by the action, e.g.~$s_2 (g) =g$.
Then, we can confirm that these operations satisfy the relations of the extended affine Weyl group $\widetilde{W}(E^{(1)}_{6})$.

A candidate for the parallel translation corresponding the time evolution of $q$-Painlev\'e equation $q$-$P(E^{(1)}_6)$ is written as $(\pi _1 \pi _2 s_4 s_5 s_3 s_6 s_4 s_3 s_0 s_6 )^2 $, which was essentially used in Tsuda \cite{Td} (see also \cite{TsqP}).
Set $s= \pi _1 \pi _2 s_4 s_5 s_3 s_6 s_4 s_3 s_0 s_6 $.
As discussed in the proof of \cite[Theorem 3.1]{TsqP}, we have
\begin{align}
& s (\nu_1) = \kappa_2 /\nu_4 , \: s (\nu_2) =\kappa_2 /\nu_3 , \: s (\nu_3) = \kappa_2/\nu_2  , \: s (\nu_4) = \kappa_2/\nu_1,  \: s (\nu_5) = \nu_8 , \\
& s (\nu_6) =\nu_7 , \: s (\nu_7 ) = \kappa_2 /\nu_5, \: s (\nu_8 ) = \kappa_2 /\nu_6 , \: s (\kappa _1 ) = \kappa_2 , \: s  (\kappa _2 ) =  q  \nu_7 \nu_8 \kappa_2 / \kappa_1 ,  \nonumber \\
& s (f)  = \kappa_2 g , \; \frac{1}{\kappa_2 s (g)}  = g +\frac{f(g -1/\nu_1)(g -1/\nu_2)(g -1/\nu_3)(g -1/\nu_4)}{(1-fg)(g-\nu_5/\kappa_2)(g-\nu_6/\kappa_2)} , \nonumber
\end{align}
and 
\begin{align}
& s ^2 (\nu_1) = q \nu_1 \nu_7 \nu_8 /\kappa_1 , \: s^2 (\nu_2) = q \nu_2 \nu_7 \nu_8 /\kappa_1 , \: s^2 (\nu_3) = q \nu_3 \nu_7 \nu_8 / \kappa_1  , \: s ^2  (\nu_4) =  q \nu_4 \nu_7 \nu_8 / \kappa_1 , \label{eq:s^2E6} \\
& s^2 (\nu_5) = \kappa_2 /\nu_6 , \: s^2 (\nu_6) =\kappa_2 / \nu_5  , \: s^2 (\nu_7 ) = q \nu_7 \kappa_2 / \kappa_1 , \: s ^2(\nu_8 ) =q \nu_8 \kappa_2 / \kappa_1 , \nonumber \\
& s ^2 (\kappa _1 ) =  q \nu_7 \nu_8 \kappa_2/ \kappa_1  , \: s ^2 (\kappa _2 ) = q ^2 \nu_7 \nu_8 \kappa_2^2 /(\nu_5 \nu_6 \kappa_1 ) , \nonumber \\
& \frac{1}{s^2 (f)}   =  \frac{ \kappa_1}{ q  \nu_7 \nu_8 } \Bigl\{ g +\frac{f(g -1/\nu_1)(g -1/\nu_2)(g -1/\nu_3)(g -1/\nu_4)}{(1-fg)(g-\nu_5/\kappa_2)(g-\nu_6/\kappa_2)} \Bigr\}  , \nonumber \\
& \frac{1}{s^2 (g)}  = s^2 (f ) + \frac{\kappa_2  g ( s^2 (f )  - s^2 (\nu_4))( s^2 (f )  -s^2 (\nu_3 ))( s^2 (f )  - s^2 (\nu_2))( s^2 (f )  -s^2(\nu_1))}{ (s(\kappa _2 )- \kappa_2  g  s^2 (f ))( s^2 (f ) - \nu_8 )( s^2 (f ) - \nu_7 )} . \nonumber 
\end{align}
Hence, the action of the operator $(\pi _1 \pi _2 s_4 s_5 s_3 s_6 s_4 s_3 s_0 s_6 )^2 $ does not coincide with the time evolution in equation (\ref{eq:qPE16}), but the two expressions resemble each other.
We investigate the difference of them.
Let $T$ be the time evolution of $q$-$P(E^{(1)}_6)$ given in equation (\ref{eq:qPE16}).
It follows from equations (\ref{eq:Tolinenu}) and (\ref{eq:qPE16}) that
\begin{align}
& \displaystyle T(\nu _i)= \nu _i \; (i=1,2,\dots ,8) , \;  T(\kappa _1 ) =  \kappa _1  / q , \; T(\kappa _2 ) = q \kappa _2 , \label{eq:E6T}
\end{align}
and
\begin{align}
& \frac{(T(f) g-1)(fg-1)}{T(f) f} = \frac{(g -1/\nu_1)(g -1/\nu_2)(g -1/\nu_3)(g -1/\nu_4)}{(g-\nu_5/\kappa_2)(g-\nu_6/\kappa_2)}, \label{eq:E6TfTg} \\
& \frac{(T(f) g- 1 )(T(f) T(g) - 1 )}{g T (g)} = \frac{(T(f) - \nu _1 ) (T(f) - \nu _2 ) (T(f) - \nu _3 ) (T(f) - \nu _4 )}{(T(f)-T(\kappa_1 / \nu_7 ))(T(f)-T(\kappa_1 / \nu_8 ))} . \nonumber
\end{align}
To clarify the difference between the time evolution $T$ and the parallel translation $s^2$, we define the operator $\Xi $ by $T= \Xi s^2$ $(\Leftrightarrow \Xi= T s^{-2})$.
It follows from a straightforward calculation that
\begin{align}
& \Xi (\nu _1) = \frac{ \nu_ 5 \nu_ 6 }{\kappa_2 } \nu _1 , \: \Xi (\nu _2) = \frac{ \nu_ 5 \nu_ 6 }{\kappa_2 }  \nu _2 , \: \Xi (\nu _3) = \frac{ \nu_ 5 \nu_ 6 }{\kappa_2 }  \nu _3 , \: \Xi (\nu _4) = \frac{ \nu_ 5 \nu_ 6 }{\kappa_2 }  \nu _4 , \: \Xi (\nu _5) = \frac{\kappa_1 }{q \kappa_2 } \nu _5, \label{eq:E6Xinu} \\
& \Xi (\nu _6) = \frac{\kappa_1 }{q \kappa_2 } \nu _6 , \: \Xi (\nu _7) = \frac{\kappa_1 }{q \nu_8 } , \: \Xi (\nu _8) = \frac{\kappa_1 }{q \nu_7 } , \: \Xi (\kappa _1) =  \frac{\kappa ^2 _1 \nu_5 \nu_6 }{q \kappa_2 \nu_7 \nu_8 } , \: \Xi (\kappa _2) = \frac{\kappa_1 \nu_5 \nu_6 }{q \kappa_2} .\nonumber
\end{align}
By combining equation (\ref{eq:s^2E6}) with equation (\ref{eq:E6TfTg}), we have
\begin{align}
& \Xi (f) = \frac{ \nu_ 5 \nu_ 6 }{\kappa_2 } f, \quad \Xi (g) = \frac{\kappa_2 }{ \nu_ 5 \nu_ 6 } g . \label{eq:E6Xifg} 
\end{align}
The expressions of equations (\ref{eq:E6Xinu}) and (\ref{eq:E6Xifg}) motivated us to introduce the equivalence in Section \ref{sec:E16}, which is related to the representation of $\widetilde{W}(E^{(1)}_{6})$.

\subsection{}
The $q$-Painlev\'e equation of type $E^{(1)}_{7} $ ($q$-$P(E^{(1)}_{7})$) given in \cite{KNY} is written as
\begin{align}
& \frac{(fg- \kappa_{1}/\kappa_{2} ) (\overline{f}g- \kappa_{1}/(q\kappa_{2}) )}{(fg-1)(\overline{f}g-1)}=\frac{(g- \nu_{5}/\kappa_{2} ) (g- \nu_{6}/\kappa_{2} ) (g- \nu_{7}/\kappa_{2} ) (g- \nu_{8}/\kappa_{2} )}{ (g - 1/\nu_{1} )(g - 1/\nu_{2} )(g - 1/\nu_{3} )(g - 1/\nu_{4} )}, \label{eq:qPE17} \\
& \frac{(fg-\kappa_{1}/\kappa_{2} ) (f\underline{g}- q\kappa_{1}/\kappa_{2} )}{(fg-1)(f\underline{g}-1)}=\frac{(f- \kappa_{1}/\nu_{5})(f- \kappa_{1}/\nu_{6})(f- \kappa_{1}/\nu_{7})(f- \kappa_{1}/\nu_{8})}{(f-\nu_{1})(f-\nu_{2})(f-\nu_{3})(f-\nu_{4})}  . \nonumber 
\end{align}
The Weyl group symmetry of the $q$-Painlev\'e equation $q$-$P(E^{(1)}_7 )$ is given by the following action of the operators $s_0, \dots ,s_7$ and $\pi $ for the parameters $(\kappa_{1}, \kappa_{2}, \nu_{1}, \dots , \nu_{8}) $ and $(f, g) $ in \cite{KNY}.
\begin{align}
 s_0 : & \: \kappa_1 \leftrightarrow \kappa_2 ,  \: f \rightarrow 1/g , \: g \rightarrow 1/f ,\label{eq:AffineWeylActionE7} \\
  s_1 : & \: \nu_3 \leftrightarrow \nu_4 , \quad   s_2 : \: \nu_2 \leftrightarrow \nu_3 , \quad  s_3 : \nu_1 \leftrightarrow \nu_2 ,\nonumber \\
  s_5 : & \: \nu_5 \leftrightarrow \nu_6 , \quad   s_6 : \: \nu_6 \leftrightarrow \nu_7 , \quad   s_7 : \: \nu_7 \leftrightarrow \nu_8 \nonumber \\
  s_4 : & \: \nu_1 \rightarrow\frac{\kappa_2}{\nu_5}, \;
  \nu_5 \rightarrow \frac{\kappa_2}{\nu_1}, \;
  \kappa_1 \rightarrow \frac{\kappa_1\kappa_2}{\nu_1\nu_5}, \nonumber \\
& \:    f \rightarrow \frac{- \kappa_2 ( \nu_1 \nu_5 - \kappa_1 )f g - \nu_5 ( \kappa_1 - \kappa_2 )f + \kappa_1 ( \nu_1 \nu_5 - \kappa_2 )}{\nu _5 \{ - ( \nu_1 \nu_5 - \kappa_2 )f g + \nu_1 ( \kappa_1 - \kappa_2 )g + ( \nu_1 \nu_5 - \kappa_1 ) \}} , \nonumber \\
  \pi  : & \: q \rightarrow 1/q, \: \nu_1 \rightarrow 1/\nu_5, \:  \nu_2 \rightarrow 1/\nu_6,  \: \nu_3 \rightarrow 1/\nu_7,  \: \nu_4 \rightarrow 1/\nu_8, \: \nu_5 \rightarrow 1/\nu_1, \: \nu_6 \rightarrow 1/\nu_2, \nonumber \\
& \: \nu_7 \rightarrow 1/\nu_3, \: \nu_8 \rightarrow 1/\nu_4, \: \kappa_1 \rightarrow 1/\kappa_1 , \: \kappa_2 \rightarrow 1/\kappa_2 , \: f \rightarrow f/\kappa_1  , \: g \rightarrow \kappa_2 g , \nonumber 
\end{align}
Then, we can confirm that these operations satisfy the relations of the extended affine Weyl group $\widetilde{W}(E^{(1)}_{7})$ whose Dynkin diagram is as follows.\\
\begin{picture}(0,70)(0,0)
\put(20,20){\circle{10}}
\qbezier(25,20)(40,20)(55,20)
\put(60,20){\circle{10}}
\qbezier(65,20)(80,20)(95,20)
\put(100,20){\circle{10}}
\qbezier(105,20)(120,20)(135,20)
\put(140,20){\circle{10}}
\qbezier(145,20)(160,20)(175,20)
\put(180,20){\circle{10}}
\qbezier(185,20)(200,20)(215,20)
\put(220,20){\circle{10}}
\qbezier(225,20)(240,20)(255,20)
\put(260,20){\circle{10}}
\qbezier(140,25)(140,40)(140,55)
\put(140,60){\circle{10}}
\put(25,5){$1$}
\put(65,5){$2$}
\put(105,5){$3$}
\put(145,5){$4$}
\put(185,5){$5$}
\put(225,5){$6$}
\put(265,5){$7$}
\put(149,55){$0$}
\end{picture}

A candidate for the parallel translation corresponding the time evolution of $q$-Painlev\'e equation $q$-$P(E^{(1)}_7)$ is written as $ ( s_4 s_5 s_1 s_4 s_6 s_5 s_1 s_2 s_4 s_7 s_6 s_5 s_1 s_2 s_3 s_4 s_0  )^2 $, which was essentially used in Tsuda \cite{Td} (see also \cite{TsqP}).
Set $s= s_4 s_5 s_1 s_4 s_6 s_5 s_1 s_2 s_4 s_7 s_6 s_5 s_1 s_2 s_3 s_4 s_0 $.
As discussed in the proof of \cite[Theorem 4.1]{TsqP}, we have
\begin{align}
& s(\nu_i) = \kappa_2 /\nu_{9-i} \:  (i=1,\dots ,8), \: s (\kappa _1 ) = \kappa_2 , \: s (\kappa _2 ) = q  \kappa_2^2 / \kappa_1 , \: s (f) = 1/g , \label{eq:E7sg} \\
& \frac{( s(g)/g -\kappa_1/(q \kappa_2))(fg -1)}{(s(g)/g -1)(f g -\kappa_1 /\kappa_2 )} = \frac{ \kappa_1/(q \kappa_2 ) (g -1/\nu_1)(g -1/\nu_2)(g -1/\nu_3)( g -1/\nu_4 )}{ (g - \nu_5 /\kappa_2)(g - \nu_6 /\kappa_2)(g - \nu_7 /\kappa_2 )(g - \nu_8 /\kappa_2 )} , \nonumber
\end{align}
and 
\begin{align}
& s^2(\nu _j) = q \nu _j \kappa_2 /\kappa_1 \: (j=1,\dots 8), \: s^2 (\kappa_1) = q \kappa_2^2 /\kappa_1, \: s^2 (\kappa_2 ) = q^3 \kappa_2^3 /\kappa_1^2, \\
& \frac{(  q \kappa_2/\kappa_1  - g s^2(f) )(fg -1)}{(1-g s^2 (f))(f g -\kappa_1 /\kappa_2 )} = \frac{ (g -1/\nu_1)(g -1/\nu_2)(g -1/\nu_3)( g -1/\nu_4 )}{ (g - \nu_5 /\kappa_2)(g - \nu_6 /\kappa_2)(g - \nu_7 /\kappa_2 )(g - \nu_8 /\kappa_2 )} , \nonumber \\
& \frac{( s^2(g)s^2 (f) -\kappa_1/\kappa_2/q^2 )( g s^2 (f) -1)}{(s^2(g)s^2 (f) -1)(g s^2 (f) -q \kappa_2/\kappa_1 )} \nonumber \\
& = \frac{ (s^2 (f) - \kappa_2 /\nu_5 ) (s^2 (f) - \kappa_2 /\nu_6 ) (s^2 (f) - \kappa_2 /\nu_7 ) (s^2 (f) - \kappa_2 /\nu_8 )  }{ ( s^2 (f) -s^2 ( \nu_1 )) ( s^2 (f) -s^2 ( \nu_2 )) ( s^2 (f) -s^2 ( \nu_3 )) ( s^2 (f) -s^2 ( \nu_4 )) } . \nonumber 
\end{align}
Hence, the action of the operator $( s_4 s_5 s_1 s_4 s_6 s_5 s_1 s_2 s_4 s_7 s_6 s_5 s_1 s_2 s_3 s_4 s_0 )^2 $ does not coincide with the time evolution in equation (\ref{eq:qPE17}), but the two expressions resemble each other.
It was established in \cite{TsqP} that the time evolution of the $q$-Painlev\'e equation of type $E^{(1)}_7 $ is expressed as the following theorem.
\begin{thm} $($\cite[Theorem 4.1 (i)]{TsqP}$)$ \label{thm:TsqPE7}
Let $\Xi $ be the transformation of the parameters defined by
\begin{align}
& (\nu_1 , \nu_2 , \nu_3 , \nu_4 ,\nu_5 ,\nu_6 , \nu_7 ,\nu_8 ;\kappa_1 ,\kappa_2 ; f ,g) \label{eq:E7Xi} \\
& \mapsto  \Bigl( \frac{\kappa_1  }{ q \kappa_2 } \nu _1 ,\frac{\kappa_1  }{ q \kappa_2 } \nu _2 ,\frac{\kappa_1  }{ q \kappa_2 } \nu _3 ,\frac{\kappa_1  }{ q \kappa_2 } \nu _4 ,\frac{\kappa_1  }{ q \kappa_2 } \nu _5 ,\frac{\kappa_1  }{ q \kappa_2 } \nu _6 ,\frac{\kappa_1  }{ q \kappa_2 } \nu _7 ,\frac{\kappa_1  }{ q \kappa_2 } \nu _8  ; \nonumber \\
& \qquad \frac{\kappa_1^2}{q^2 \kappa _2^2 } \kappa_1,\frac{\kappa_1^2 }{q^2 \kappa ^2_2 } \kappa _2; \frac{\kappa_1}{q \kappa _2} f , \frac{q \kappa_2 }{\kappa_1 } g \Bigr) ,\nonumber
\end{align}
Let $T$ be the transformation of the parameters defined by 
\begin{equation}
T= \Xi \cdot ( s_4 s_5 s_1 s_4 s_6 s_5 s_1 s_2 s_4 s_7 s_6 s_5 s_1 s_2 s_3 s_4 s_0  )^2 . \label{eq:E7TXis}
\end{equation}
Then 
\begin{align}
& \frac{(T(f) g -\kappa_1/(q \kappa_2 ) )(f g -\kappa_1 /\kappa_2 )}{( T(f) g-1 )(fg -1)} = \frac{ ( g -  \nu _5 / \kappa_2 ) ( g -  \nu _6 / \kappa_2 ) ( g -  \nu _7 / \kappa_2 ) ( g -  \nu _8 / \kappa_2 ) }{(g -1/ \nu _1 ) (g -1/ \nu _2 ) (g -1/ \nu _3 ) (g -1/ \nu _4 ) } , \label{eq:E7TfTg} \\
& \frac{( T(g) T(f) -\kappa_1/(q^2 \kappa_2) )( T(f) g - \kappa _1 /( q \kappa_2 ))}{(T(g)T(f) -1)(T (f) g - 1 )} \nonumber \\
& = \frac{ (T (f) - T(\kappa_1 / \nu_5 )) (T (f) - T(\kappa_1 /\nu_6 )) (T (f) - T( \kappa_1 /\nu_7 )) (T (f) - T(\kappa_1 / \nu_8 )) }{ ( T (f) - \nu_1 ) ( T (f) - \nu_2 ) ( T (f) - \nu_3 ) ( T (f) - \nu_4 )}. \nonumber
\end{align}
Namely the operator $T$ represents the time evolution of $q$-$P(E^{(1)}_7)$ (see equation (\ref{eq:qPE17})).
On the other parameters, we have
\begin{align}
& \displaystyle T(\nu _i)= \nu _i \; (i=1,2,\dots ,8) , \;  T(\kappa _1 ) =  \kappa _1  / q , \; T(\kappa _2 ) = q \kappa _2  . \label{eq:E7T} 
\end{align}
\end{thm}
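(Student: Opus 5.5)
The plan is a direct verification using the formulas for $s$ and $s^2$ recorded just before the statement (taken from the proof of \cite[Theorem 4.1]{TsqP}). I read the composition $T=\Xi\cdot s^2$ in the same way as in the $E^{(1)}_6$ case, where $\Xi=Ts^{-2}$. For any variable $x$, $T(x)$ is obtained by taking the rational expression $s^2(x)$ and substituting $\Xi(\nu_j)$, $\Xi(\kappa_i)$, $\Xi(f)$, $\Xi(g)$ for $\nu_j$, $\kappa_i$, $f$, $g$. The proof then has two parts: first the parameters, then the two relations for $T(f)$ and $T(g)$.

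\emph{Parameters.} The key observation is that $\Xi$ fixes $\kappa_1/\kappa_2$ and $\nu_j/\kappa_2$, since numerator and denominator are rescaled by the same factor. The same holds for every ratio of two $\nu$'s. Using $s^2(\nu_j)=q\nu_j\kappa_2/\kappa_1$ we get
\[
T(\nu_j)=q\,\Xi(\nu_j)\frac{\kappa_2}{\kappa_1}=q\cdot\frac{\kappa_1}{q\kappa_2}\nu_j\cdot\frac{\kappa_2}{\kappa_1}=\nu_j .
\]
Next, writing $s^2(\kappa_1)=q\kappa_2\cdot(\kappa_2/\kappa_1)$, we get
\[
T(\kappa_1)=q\,\Xi(\kappa_2)\frac{\kappa_2}{\kappa_1}=q\frac{\kappa_1^2}{q^2\kappa_2}\cdot\frac{\kappa_2}{\kappa_1}=\frac{\kappa_1}{q}.
\]
Similarly $s^2(\kappa_2)=q^3\kappa_2(\kappa_2/\kappa_1)^2$ gives $T(\kappa_2)=q\kappa_2$. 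This proves (\ref{eq:E7T}).

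\emph{First relation.} Apply $\Xi$ to the relation characterizing $s^2(f)$. The following quantities are invariant under $\Xi$:
\[
\Xi(f)\Xi(g)=fg,\qquad \Xi\!\left(\frac{q\kappa_2}{\kappa_1}\right)=\frac{q\kappa_2}{\kappa_1},\qquad \Xi(g)=\frac{q\kappa_2}{\kappa_1}g .
\]
On the left, $g\,s^2(f)$ becomes $\frac{q\kappa_2}{\kappa_1}gT(f)$, so the left side turns into
\[
\frac{(1-gT(f))(fg-1)}{(\kappa_1/(q\kappa_2)-gT(f))(fg-\kappa_1/\kappa_2)} .
\]
On the right, each factor $g-1/\nu_i$ and $g-\nu_k/\kappa_2$ picks up the same scalar $q\kappa_2/\kappa_1$, so the right side is unchanged. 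Taking reciprocals gives exactly the first equation of (\ref{eq:E7TfTg}).

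\emph{Second relation.} Apply $\Xi$ in the same way to the relation for $s^2(g)s^2(f)$. The products $s^2(g)s^2(f)$ become $T(g)T(f)$, because $\Xi$ preserves the product structure through the scalings $f\mapsto\frac{\kappa_1}{q\kappa_2}f$, $g\mapsto\frac{q\kappa_2}{\kappa_1}g$. The term $g\,s^2(f)$ becomes $\frac{q\kappa_2}{\kappa_1}gT(f)$. After dividing out the common scalars, the left side becomes the left side of the second equation of (\ref{eq:E7TfTg}).

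On the right, $\Xi(s^2(\nu_j))=T(\nu_j)=\nu_j$. The remaining terms $\kappa_2/\nu_k$ are mapped to $\frac{\kappa_1^2}{q^2\kappa_2^2}\cdot\frac{q\kappa_2}{\kappa_1}\cdot\frac{\kappa_2}{\nu_k}=\frac{\kappa_1}{q\nu_k}$, which equals $T(\kappa_1/\nu_k)$. Thus the right side is the right side of (\ref{eq:E7TfTg}).

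The main obstacle is bookkeeping, not conceptual difficulty. One must fix the composition convention correctly; with the other order one would have to evaluate $s^2$ on $\Xi$-images, and $s^2$ acts nontrivially on $f$ and $g$. One must also track the scalar factors in the second relation, where constants like $\kappa_1/(q^2\kappa_2)$ must come out exactly right. I would first check the convention on $\nu_j$ and $\kappa_1$, as done above, and only then treat the $f,g$ relations.
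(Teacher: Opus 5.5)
Your verification is correct and follows the route implicit in the paper: take the $s^2$ formulas recorded from \cite{TsqP}, read $T(x)=\Xi(s^2(x))$, and apply $\Xi$ to each identity. Your scalings all check out, including the constants $\kappa_1/(q\kappa_2)$ and $\kappa_1/(q^2\kappa_2)$ and $T(\kappa_1/\nu_k)=\kappa_1/(q\nu_k)$. One slip to fix: your ``key observation'' is wrong for $\nu_j/\kappa_2$, since $\Xi$ multiplies it by $q\kappa_2/\kappa_1$ (the same factor as for $g$ and $1/\nu_i$); your first-relation computation in fact uses this corrected scaling, so restrict the invariance claim to $\kappa_1/\kappa_2$ and ratios of the $\nu$'s.
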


\subsection{}
The $q$-Painlev\'e equation of type $D^{(1)}_5 $ ($q$-$P(D^{(1)}_5)$) given in \cite{KNY} is written as
\begin{equation}
f\overline{f} = \nu_3\nu_4\, \frac{ ( g - \nu_5 /\kappa_2 ) ( g - \nu_6/\kappa_2 )}{ ( g - 1/\nu_1 ) ( g - 1/\nu_2) }, \quad
g\underline{g} = \frac{1}{\nu_1\nu_2}\frac{( f - \kappa_1 /\nu_7 ) ( f - \kappa_1/\nu_8 )}{(f - \nu_3)(f - \nu_4)}. \label{eq:qPD15}
\end{equation}
The Weyl group symmetry of the $q$-Painlev\'e equation $q$-$P(D^{(1)}_5 )$ is given by the following action of the operators $s_0, \dots ,s_5$, $\pi _1$ and $\pi _2$ for the parameters $(\kappa_{1}, \kappa_{2}, \nu_{1}, \dots , \nu_{8}) $ and $(f, g) $ in \cite{KNY}.
\begin{align}
s_0 : & \: \nu_7 \leftrightarrow \nu_8 , \quad s_1 : \: \nu_3 \leftrightarrow \nu_4 , \quad  s_4 : \: \nu_1 \leftrightarrow \nu_2 , \quad  s_5 : \: \nu_5 \leftrightarrow \nu_6 , \label{eq:AffineWeylActionD5} \\
s_2 : & \: \nu_3 \rightarrow\frac{\kappa_1}{\nu_7}, \; 
 \nu_7 \rightarrow \frac{\kappa_1}{\nu_3}, \;
 \kappa_2 \rightarrow \frac{\kappa_1\kappa_2}{\nu_3\nu_7}, \;
 g \rightarrow g\, \frac{f - \nu_3}{f - \kappa_1 /\nu_7} , \nonumber \\
s_3 : & \: \nu_1 \rightarrow\frac{\kappa_2}{\nu_5}, \;
 \nu_5 \rightarrow \frac{\kappa_2}{\nu_1}, \;
 \kappa_1 \rightarrow \frac{\kappa_1\kappa_2}{\nu_1\nu_5}, \;
 f \rightarrow f\, \frac{g - 1/\nu_1}{g - \nu_5/\kappa_2} , \nonumber \\
  \pi _1 : & \: q \rightarrow 1/q, \: \nu_1 \rightarrow 1/\nu_1, \:  \nu_2 \rightarrow 1/\nu_2,  \: \nu_3 \rightarrow 1/\nu_7,  \: \nu_4 \rightarrow 1/\nu_8, \: \nu_5 \rightarrow 1/\nu_5, \: \nu_6 \rightarrow 1/\nu_6, \nonumber \\
& \: \nu_7 \rightarrow 1/\nu_3, \: \nu_8 \rightarrow 1/\nu_4, \: \kappa_1 \rightarrow 1/\kappa_1 , \: \kappa_2 \rightarrow 1/\kappa_2 , \: f \rightarrow f/\kappa_1 , \: g \rightarrow 1/g , \nonumber \\
  \pi _2 : & \: q \rightarrow 1/q, \: \nu_1 \rightarrow 1/\nu_7, \:  \nu_2 \rightarrow 1/\nu_8,  \: \nu_3 \rightarrow 1/\nu_5,  \: \nu_4 \rightarrow 1/\nu_6, \: \nu_5 \rightarrow 1/\nu_3, \: \nu_6 \rightarrow 1/\nu_4, \nonumber \\
& \: \nu_7 \rightarrow 1/\nu_1, \: \nu_8 \rightarrow 1/\nu_2, \: \kappa_1 \rightarrow 1/\kappa_2 , \: \kappa_2 \rightarrow 1/\kappa_1 , \: f \rightarrow 1/(\kappa_2 g) , \: g \rightarrow \kappa_1/f . \nonumber 
\end{align}
Then, we can confirm that these operations satisfy the relations of the extended Weyl group $\widetilde{W}(D^{(1)}_5)$ whose Dynkin diagram is as follows.\\
\begin{picture}(0,100)(0,0)
\put(70,80){\circle{10}}
\put(70,20){\circle{10}}
\put(190,80){\circle{10}}
\put(190,20){\circle{10}}
\put(110,50){\circle{10}}
\put(150,50){\circle{10}}
\qbezier(74,77)(90,65)(106,53)
\qbezier(74,23)(90,35)(106,47)
\qbezier(186,77)(170,65)(154,53)
\qbezier(186,23)(170,35)(154,47)
\qbezier(115,50)(130,50)(145,50)
\put(78,75){$0$}
\put(78,15){$1$}
\put(175,75){$4$}
\put(175,15){$5$}
\put(105,35){$2$}
\put(145,35){$3$}
\end{picture}

A candidate for the parallel translation corresponding the time evolution of $q$-Painlev\'e equation $q$-$P(D^{(1)}_5)$ is written as $(\pi _2 \pi _1 s_2 s_1 s_0 s_2 )^2 $, which was essentially written in Sakai \cite{Sak}.
Set $s=  \pi _2 \pi _1 s_2 s_1 s_0 s_2 $.
As discussed in the proof of \cite[Theorem 2.1]{TsqP}, we have
\begin{align}
& s (\nu_1) = \nu_7 , \: s (\nu_2) =\nu_8, \: s (\nu_3) = \kappa_2 /\nu_6 , \: s (\nu_4) = \kappa_2 / \nu_5 , \\
& s (\nu_5) = \nu_3 , \: s (\nu_6) =\nu_4 , \: s (\nu_7 ) = \kappa_2 /\nu_2, \: s (\nu_8 ) = \kappa_2 /\nu_1 , \nonumber \\
& s (\kappa _1 ) = \kappa_2 , \: s  (\kappa _2 ) = \kappa_1 \kappa_2^2 /(\nu_1 \nu_2 \nu_5 \nu_6) = q \nu_3 \nu_4 \nu_7 \nu_8 / \kappa_1 , \: s (f) =1/g,  \nonumber \\
& s (g ) = \frac{\kappa_1  f }{q \nu_3 \nu_4 \nu_7 \nu_8 } \frac{( g -1/\nu_1 )( g -1 /\nu_2 )}{(g -\nu _5 /\kappa _2 )(g  - \nu _6 /\kappa _2)}, \nonumber 
\end{align}
and 
\begin{align}
& s ^2 (\nu_1) = \kappa_2 /\nu_2 , \: s^2 (\nu_2) =\kappa_2 /\nu_1 , \: s^2 (\nu_3) = q \nu_3 \nu_7 \nu_8 / \kappa_1  , \: s ^2  (\nu_4) =  q \nu_4 \nu_7 \nu_8 / \kappa_1 , \\
& s^2 (\nu_5) = \kappa_2 /\nu_6 , \: s^2 (\nu_6) =\kappa_2 / \nu_5  , \: s^2 (\nu_7 ) = q \nu_3 \nu_4 \nu_7 / \kappa_1 , \: s ^2(\nu_8 ) = q \nu_3 \nu_4 \nu_8 / \kappa_1 , \nonumber \\
& s ^2 (\kappa _1 ) =  q \nu_3 \nu_4 \nu_7 \nu_8 / \kappa_1  , \: s ^2 (\kappa _2 ) = q \kappa_2^3  / (\nu_1 \nu_2 \nu_5 \nu_6 ) ,  \nonumber \\
& s^2 (f) = \frac{q \nu_3 \nu_4 \nu_7 \nu_8 }{ \kappa_1  f} \frac{(g -\nu _5 /\kappa _2 )(g  - \nu _6 /\kappa _2)}{( g -1/\nu_1 )( g -1 /\nu_2 )}  , \nonumber \\
& s^2 (g)  = \frac{1}{g} \frac{1}{s^2 (\kappa _2)}\frac{( 1/s^2 (f) -1/\nu_7 )(1/s^2 (f) -1 /\nu_8 )}{(1/s^2 (f) -s^2 (1 /\nu _4 ))(1/s^2 (f) -s^2 (1 /\nu _3 ))} . \nonumber 
\end{align}
Hence, the action of the operator $(\pi _2 \pi _1 s_2 s_1 s_0 s_2  )^2 $ does not coincide with the time evolution in equation (\ref{eq:qPD15}), but the two expressions resemble each other.

The time evolution of the $q$-Painlev\'e equation of type $D^{(1)}_5 $ is expressed as the following theorem,
\begin{thm}  $($\cite[Theorem 2.1 (i)]{TsqP}$)$ \label{thm:D5TW} 
Let $\Xi $ be the transformation of the parameters defined by
\begin{align}
& (\nu_1 , \nu_2 , \nu_3 , \nu_4 ,\nu_5 ,\nu_6 , \nu_7 ,\nu_8 ;\kappa_1 ,\kappa_2 ; f ,g) \label{eq:D5Xi} \\
&  \mapsto \Bigl( \frac{\nu _1 \nu_ 5 \nu_6 }{ \kappa_2 } , \frac{\nu _2 \nu_ 5 \nu_6}{\kappa_2} , \frac{\kappa_1 }{q \nu_4 } , \frac{\kappa_1 }{q \nu_3 } ,\frac{\nu _5 \nu_ 1 \nu_2 }{\kappa_2 } ,\frac{\nu _6 \nu_ 1 \nu_2 }{\kappa_2 } , \frac{\kappa_1 }{q \nu_8 } , \frac{\kappa_1 }{q \nu_7 }  ; \nonumber\\
& \qquad \qquad \qquad \qquad \frac{\kappa_1^3}{q^2 \nu_3 \nu_4 \nu_7 \nu_8 } ,\frac{\nu_1 \nu_2 \nu_5 \nu_6 }{\kappa _2 } ; \frac{f \kappa_1}{q \nu_3 \nu_4 } , \frac{g \kappa_2 }{\nu_ 5 \nu_6 } \Bigr) .  \nonumber
\end{align}
Let $T$ be the transformation of the parameters defined by 
\begin{equation}
T= \Xi \cdot (\pi _2 \pi _1 s_2 s_1 s_0 s_2 )^2 .
\end{equation}
Then 
\begin{align}
& T(f)= \frac{ \nu_3 \nu_4  }{ f  } \frac{(g - \nu _5 /\kappa _2 )(g  - \nu _6 /\kappa _2)}{( g -1/\nu_1 )( g -1 /\nu_2 )}, \label{eq:D5TfTg} \\
&  T(g)=  \frac{1}{g \nu_1 \nu_2 }\frac{( T(f) - T(\kappa_1 /\nu_7)) ( T(f) - T(\kappa_1/ \nu_8))}{(T(f) - \nu_3 )(T(f) - \nu_4)}. \nonumber
\end{align}
Namely the operator $T$ represents the time evolution of $q$-$P(D^{(1)}_5)$ (see equation (\ref{eq:qPD15})).
On the other parameters, we have
\begin{align}
& \displaystyle T(\nu _i)= \nu _i \; (i=1,2,\dots ,8) , \;  T(\kappa _1 ) =  \kappa _1  / q , \; T(\kappa _2 ) = q \kappa _2  . \label{eq:D5T}
\end{align}
\end{thm}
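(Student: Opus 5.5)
The plan is a direct computation. It rests on the expressions for $s=\pi_2\pi_1 s_2 s_1 s_0 s_2$ and $s^2$ recorded just before the statement, together with the convention, already used in the $E^{(1)}_6$ case, for how a composition of transformations acts. I would read $T=\Xi\cdot s^2$ in the same way as $T=\Xi s^2$ there. For any rational expression $F$ in the variables, $T(F)$ is obtained from $s^2(F)$, written as a rational function of $(\nu_i,\kappa_j,f,g)$, by substituting $\Xi(\nu_i),\Xi(\kappa_j),\Xi(f),\Xi(g)$ for the variables. Equivalently, $\Xi$ is applied to the arguments of the already computed expression $s^2(F)$.

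\textbf{Step 1: the parameters.} I would verify (\ref{eq:D5T}) first. For example,
\[
T(\nu_1)=\Xi(\kappa_2)/\Xi(\nu_2)=\frac{\nu_1\nu_2\nu_5\nu_6/\kappa_2}{\nu_2\nu_5\nu_6/\kappa_2}=\nu_1 .
\]
Similarly,
\[
T(\nu_3)=q\,\Xi(\nu_3\nu_7\nu_8)/\Xi(\kappa_1)=q\cdot\frac{\kappa_1^3}{q^3\nu_4\nu_8\nu_7}\cdot\frac{q^2\nu_3\nu_4\nu_7\nu_8}{\kappa_1^3}=\nu_3 .
\]
The cases $\nu_2,\nu_4,\dots,\nu_8$ are analogous. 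For $\kappa_1$, one gets $T(\kappa_1)=q\,\Xi(\nu_3\nu_4\nu_7\nu_8)/\Xi(\kappa_1)=\kappa_1/q$, since $\Xi(\nu_3\nu_4\nu_7\nu_8)=\kappa_1^4/(q^4\nu_3\nu_4\nu_7\nu_8)$. For $\kappa_2$, one gets $T(\kappa_2)=q\,\Xi(\kappa_2)^3/\Xi(\nu_1\nu_2\nu_5\nu_6)=q\kappa_2$, since $\Xi(\nu_1\nu_2\nu_5\nu_6)=(\nu_1\nu_2\nu_5\nu_6)^3/\kappa_2^4$. As a consistency check, I would also confirm that $\Xi$ preserves the relation $\kappa_1^2\kappa_2^2=q\nu_1\cdots\nu_8$ (with $q$ fixed), so that it is a legitimate substitution.

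\textbf{Step 2: $T(f)$.} I would apply $\Xi$ to the formula for $s^2(f)$. The prefactor becomes
\[
\frac{q\,\Xi(\nu_3\nu_4\nu_7\nu_8)}{\Xi(\kappa_1)\Xi(f)}=\frac{\nu_3\nu_4}{f}.
\]
The factors $\Xi(g)-\Xi(\nu_5)/\Xi(\kappa_2)$ equal $\frac{\kappa_2}{\nu_5\nu_6}\,(g-\nu_5/\kappa_2)$, because $\Xi(\nu_5)/\Xi(\kappa_2)=\nu_5/\nu_6$. Likewise, $\Xi(g)-1/\Xi(\nu_1)=\frac{\kappa_2}{\nu_5\nu_6}\,(g-1/\nu_1)$. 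The common scalar factors cancel between numerator and denominator, which gives the first line of (\ref{eq:D5TfTg}).

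\textbf{Step 3: $T(g)$.} This is the main bookkeeping obstacle, because $s^2(g)$ is written in terms of $s^2(f)$, $s^2(\kappa_2)$ and $s^2(\nu_j)$. Applying $\Xi$ sends $s^2(f)$ to $T(f)$, $s^2(\kappa_2)$ to $T(\kappa_2)=q\kappa_2$, and $s^2(1/\nu_{3,4})$ to $1/\nu_{3,4}$. The bare symbols $1/\nu_7,1/\nu_8$ and $1/g$ become $1/\Xi(\nu_7)=q\nu_8/\kappa_1=1/T(\kappa_1/\nu_8)$, $1/T(\kappa_1/\nu_7)$ and $\nu_5\nu_6/(\kappa_2 g)$, respectively. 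Thus
\[
T(g)=\frac{\nu_5\nu_6}{q\kappa_2^2\,g}\,\frac{(1/T(f)-1/T(\kappa_1/\nu_7))(1/T(f)-1/T(\kappa_1/\nu_8))}{(1/T(f)-1/\nu_3)(1/T(f)-1/\nu_4)}.
\]
Clearing the reciprocals multiplies this by $\nu_3\nu_4/(T(\kappa_1/\nu_7)T(\kappa_1/\nu_8))=q^2\nu_3\nu_4\nu_7\nu_8/\kappa_1^2$. The resulting constant is $q\nu_3\nu_4\nu_5\nu_6\nu_7\nu_8/(\kappa_1^2\kappa_2^2)$, which equals $1/(\nu_1\nu_2)$ by the relation $\kappa_1^2\kappa_2^2=q\nu_1\cdots\nu_8$. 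This yields the second line of (\ref{eq:D5TfTg}) and completes the proof.
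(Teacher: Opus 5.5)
Your proposal is correct and follows essentially the route the paper relies on. There, $T$ is obtained by applying $\Xi$ to the recorded formulas for $s^2$, with the same automorphism convention $T(F)=\Xi(s^2(F))$ that you adopt; this is the convention under which the statement holds. Your verifications of (\ref{eq:D5T}) and of both lines of (\ref{eq:D5TfTg}) go through, including the use of $\kappa_1^2\kappa_2^2=q\nu_1\cdots\nu_8$ to produce the constant $1/(\nu_1\nu_2)$.

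One small slip: $\Xi(\nu_5)/\Xi(\kappa_2)=1/\nu_6$, not $\nu_5/\nu_6$. The factorization you actually use, $\Xi(g)-\Xi(\nu_5)/\Xi(\kappa_2)=\frac{\kappa_2}{\nu_5\nu_6}(g-\nu_5/\kappa_2)$, is nevertheless correct.
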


\subsection{} \label{sec:WE8}
On the $q$-Painlev\'e equation of type $E^{(1)}_{8} $ ($q$-$P(E^{(1)}_{8})$), we recall the expression in Yamada's paper \cite{Y}, which is written as
\begin{align}
& \frac{(\overline{f}-g)(f-g)-(\kappa _1/q-\kappa _2)(\kappa _1-\kappa _2)/ \kappa _2}
{( q \overline{f}/ \kappa _1-g/\kappa _2)(f/\kappa _1-g/\kappa _2)
-(q/\kappa _1-1/\kappa _2)(1/\kappa _1-1/\kappa _2)\kappa _2}
=\frac{\kappa _1^2\kappa _2^4}{q}\dfrac{P_n(\kappa _2,g)}{P_d(\kappa _2,g)}, \label{eq:qPE18} \\
& \frac{(\overline{f}-\overline{g})(\overline{f}-g)-(\kappa _1/q- q \kappa _2 )(\kappa _1/q-\kappa _2)q/\kappa _1}
{(q \overline{f}/\kappa _1-\overline{g}/(q \kappa _2))
(q \overline{f}/\kappa _1-g/\kappa _2)-(q/\kappa _1- 1/(q \kappa _2))(q/\kappa _1-1/\kappa _2)\kappa _1/q} \nonumber \\
& \qquad \qquad \qquad =\frac{\kappa _1^4\kappa _2^2}{q^3}\dfrac{P_n(\kappa _1/q,\overline{f})}{P_d(\kappa _1/q ,\overline{f})}, \nonumber 
\end{align}
where
\begin{align}
& P_n(h,x)= x^4-m_1 x^3+(m_2-3h -h^{-3}m_8) x^2 \\
& \qquad +(2hm_1-m_3+h^{-2}m_7) x +(h^2 - h m_2+m_4-h^{-1}m_6+h^{-2}m_8),  \nonumber \\
& P_d(h,x)= m_8 x^4-hm_7 x^3+(h^2m_6-3hm_8 - h^{5} ) x^2  \nonumber \\
& \qquad +(2h^2m_7-h^3m_5+h^{5}m_1) x +(h^6 -h^5m_2+h^4m_4-h^{3}m_6+h^{2}m_8),  \nonumber \\
& m_k = \sum _{1\leq i_1 < i_2 < \cdots < i_k \leq 8} \nu _{i_1} \nu _{i_2} \cdots \nu _{i_k} \; (k=1,2,\dots ,8) .  \nonumber 
\end{align}
More informations on the definition of $q$-$P(E^{(1)}_8 )$ are provided in \cite{Y}.
The Weyl group symmetry of the $q$-Painlev\'e equation $q$-$P(E^{(1)}_8 )$ is given by the following action of the operators $s_0, \dots ,s_8$ for the parameters $(\kappa_{1}, \kappa_{2}, \nu_{1}, \dots , \nu_{8}) $ and $(f, g) $ in \cite{Y,KNY}.
\begin{align}
 s_1 : & \: \kappa_1 \leftrightarrow \kappa_2 ,  \: f \leftrightarrow g  ,\label{eq:AffineWeylActionE8} \\
  s_0 : & \: \nu_1 \leftrightarrow \nu_2 , \quad   s_3 : \: \nu_2 \leftrightarrow \nu_3 , \quad  s_4 : \nu_3 \leftrightarrow \nu_4 ,\nonumber \\
  s_5 : & \: \nu_4 \leftrightarrow \nu_5 , \quad   s_6 : \: \nu_5 \leftrightarrow \nu_6 , \quad   s_7 : \: \nu_6 \leftrightarrow \nu_7 , \quad   s_8 : \: \nu_7 \leftrightarrow \nu_8 \nonumber \\
  s_ 2: & \: \nu_1 \rightarrow\frac{\kappa_2}{\nu_2}, \;
  \nu_2 \rightarrow \frac{\kappa_2}{\nu_1}, \;
  \kappa_1 \rightarrow \frac{\kappa_1\kappa_2}{\nu_1\nu_2}, \nonumber \\
& \: f \rightarrow \{ \nu_1 \nu_2 ( \kappa_1 - \kappa_2 ) f g + \kappa_2 ( \nu_1 + \nu_2 ) ( \nu_1 \nu_2 - \kappa_1 ) f \nonumber \\
& \qquad  \qquad - \kappa_1 ( \nu_1 + \nu_2 ) ( \nu_1 \nu_2 - \kappa_2 ) g  + ( \kappa_1 - \kappa_2 ) ( \nu_1 \nu_2 - \kappa_1 ) ( \nu_1 \nu_2 - \kappa_2 ) \} \nonumber \\ 
&  \qquad \quad / \{  \nu_1 \nu_2 ( \nu_1 \nu_2 - \kappa_2 ) f - \nu_1 \nu_2 ( \nu_1 \nu_2 - \kappa_1 ) g - \nu_1 \nu_2 ( \nu_1 + \nu_2 )( \kappa_1 - \kappa_2 ) \} . \nonumber
\end{align}
Then we can confirm that these operations satisfy the relations of the affine Weyl group $W (E^{(1)}_{8})$ whose Dynkin diagram is as follows.\\
\begin{picture}(0,90)(0,0)
\put(20,20){\circle{10}}
\qbezier(25,20)(40,20)(55,20)
\put(60,20){\circle{10}}
\qbezier(65,20)(80,20)(95,20)
\put(100,20){\circle{10}}
\qbezier(105,20)(120,20)(135,20)
\put(140,20){\circle{10}}
\qbezier(145,20)(160,20)(175,20)
\put(180,20){\circle{10}}
\qbezier(185,20)(200,20)(215,20)
\put(220,20){\circle{10}}
\qbezier(225,20)(240,20)(255,20)
\put(260,20){\circle{10}}
\qbezier(265,20)(280,20)(295,20)
\put(300,20){\circle{10}}
\qbezier(100,25)(100,40)(100,55)
\put(100,60){\circle{10}}
\put(25,5){$1$}
\put(65,5){$2$}
\put(105,5){$3$}
\put(145,5){$4$}
\put(185,5){$5$}
\put(225,5){$6$}
\put(265,5){$7$}
\put(305,5){$8$}
\put(109,55){$0$}
\end{picture}
\\
On $q$-$P(E^{(1)}_8)$, a relationship between the time evolution and the symmetry of $W (E^{(1)}_8) $ was established by Yamada \cite{Y}.
Let $i , j$ be distinct integers in $\{ 1,2,3,4,5,6,7,8 \}$.
Define the action $\mu_ {ij}$ by
\begin{align}
  \mu_ {ij}: & \: \nu_i \rightarrow\frac{\kappa_2}{\nu_j}, \; \nu_j \rightarrow \frac{\kappa_2}{\nu_i}, \; \kappa_1 \rightarrow \frac{\kappa_1\kappa_2}{\nu_i\nu_j}, \\
& \: f \rightarrow \{ \nu_i \nu_j ( \kappa_1 - \kappa_2 ) f g + \kappa_2 ( \nu_i + \nu_j ) ( \nu_i \nu_j - \kappa_1 ) f  \nonumber \\
& \qquad \qquad - \kappa_1 ( \nu_i + \nu_j ) ( \nu_i \nu_j - \kappa_2 ) g + ( \kappa_1 - \kappa_2 ) ( \nu_i \nu_j - \kappa_1 ) ( \nu_i \nu_j - \kappa_2 ) \}  \nonumber \\
& \qquad \quad / \{ \nu_i \nu_j ( \nu_i \nu_j - \kappa_2 ) f - \nu_i \nu_j ( \nu_i \nu_j - \kappa_1 ) g - \nu_i \nu_j ( \nu_i + \nu_j )( \kappa_1 - \kappa_2 ) \} .  \nonumber 
\end{align}
Let $r$ and $T_1$ be the following compositions
\begin{equation}
r=s_{0}\mu_{12}s_{4}\mu_{34}s_{6}\mu_{56}s_{8}\mu_{78}, \quad T_1= s_1 r s_1 r. \label{eq:rT1def} 
\end{equation}
Their actions were calculated in \cite{Y} as
\begin{align}
& r(\kappa _1)=v \kappa _2, \; r(\kappa _2)=\kappa _2, \; r(\nu _i)=\dfrac{\kappa _2}{\nu _i}, \;  r(f)=\overline{f} v, \;  r(g)=g, \label{eq:rT1act} \\
& T_1(\kappa _1)=\frac{\kappa _1}{q} v^2, \; T_1(\kappa _2)=q \kappa _2 v^2, \; T_1(\nu _i) = \nu _i v, \; T_1(f)=\overline{f} v, \; T_1(g)=\overline{g} v, \nonumber 
\end{align}
where $v=q\kappa _2/\kappa _1$ and  $\overline{\: \cdot \: }$ is the time evolution defined in equation (\ref{eq:qPE18}).
Hence, the affine Weyl group translation $T_1$ is related to the time evolution $\overline{\: \cdot \: }$ by re-scaling  the variables $\nu_{1}, \nu_{2}, \dots , \nu_{8} , \kappa _1 , \kappa _2, f,g $.
Note that the action of $T_1$ can be written as a composition of the operators $s_0, s_1 , \dots , s_8$, which follows from $\mu _{12} = s_2$, $\mu _{34} = s_{3} s_{4} s_{0} s_{3} \mu _{12} s_{3} s_{4} s_{0} s_{3}$, $\mu _{56} = s_{5} s_{6} s_{4} s_{5} \mu _{34} s_{5} s_{6} s_{4} s_{5}$ and $\mu _{78} = s_{7} s_{8} s_{6} s_{7} \mu _{56} s_{7} s_{8} s_{6} s_{7}$.
By introducing the operator $\Xi = T \cdot T_1^{-1}$, we obtain the following theorem.
\begin{thm} $($c.f.~\cite[Appendix A]{Y}$)$  \label{thm:E8TW}
Let $\Xi $ be the transformation of the parameters defined by
\begin{align}
& (\nu_1 , \nu_2 , \nu_3 , \nu_4 ,\nu_5 ,\nu_6 , \nu_7 ,\nu_8 ;\kappa_1 ,\kappa_2 ; f ,g) \label{eq:E8Xi} \\
& \mapsto  \Bigl( \frac{\kappa_1  }{ q \kappa_2 } \nu _1 ,\frac{\kappa_1  }{ q \kappa_2 } \nu _2 ,\frac{\kappa_1  }{ q \kappa_2 } \nu _3 ,\frac{\kappa_1  }{ q \kappa_2 } \nu _4 ,\frac{\kappa_1  }{ q \kappa_2 } \nu _5 ,\frac{\kappa_1  }{ q \kappa_2 } \nu _6 ,\frac{\kappa_1  }{ q \kappa_2 } \nu _7 ,\frac{\kappa_1  }{ q \kappa_2 } \nu _8  ; \nonumber \\
& \qquad \frac{\kappa_1^2}{q^2 \kappa _2^2 } \kappa_1,\frac{\kappa_1^2 }{q^2 \kappa ^2_2 } \kappa _2; \frac{\kappa_1}{q \kappa _2} f , \frac{\kappa_1}{q \kappa _2} g \Bigr) ,\nonumber
\end{align}
Let $T$ be the transformation of the parameters defined by 
\begin{equation}
T= \Xi \cdot (  s_1 s_{0} s_{2} s_{4} \mu_{34} s_{6} \mu_{56} s_{8} \mu_{78}   )^2 , \label{eq:E8TXis}
\end{equation}
where $\mu _{34} = s_{3} s_{4} s_{0} s_{3} s_{2} s_{3} s_{4} s_{0} s_{3}$, $\mu _{56} = s_{5} s_{6} s_{4} s_{5} \mu _{34} s_{5} s_{6} s_{4} s_{5}$ and $\mu _{78} = s_{7} s_{8} s_{6} s_{7} \mu _{56} s_{7} s_{8} s_{6} s_{7}$.
Then, the operator $T$ represents the time evolution of $q$-$P(E^{(1)}_8)$ (see equation (\ref{eq:qPE18})).
On the other parameters, we have
\begin{align}
& \displaystyle T(\nu _i)= \nu _i \; (i=1,2,\dots ,8) , \;  T(\kappa _1 ) =  \kappa _1  / q , \; T(\kappa _2 ) = q \kappa _2  . \label{eq:E8T} 
\end{align}
\end{thm}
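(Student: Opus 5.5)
The plan is to reduce everything to Yamada's computation (\ref{eq:rT1act}) of $T_1 = s_1 r s_1 r$. First I check that the word in (\ref{eq:E8TXis}) equals $T_1$. Since $\mu_{12}=s_2$ and the given expressions for $\mu_{34},\mu_{56},\mu_{78}$ hold, we have $r = s_0 \mu_{12} s_4 \mu_{34} s_6 \mu_{56} s_8 \mu_{78}$. Hence
\[
s_1 s_0 s_2 s_4 \mu_{34} s_6 \mu_{56} s_8 \mu_{78} = s_1 r ,
\]
and the square of this word is $s_1 r s_1 r = T_1$. The conjugation identities for $\mu_{34},\mu_{56},\mu_{78}$ are checked directly from (\ref{eq:AffineWeylActionE8}). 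The word $s_3 s_4 s_0 s_3$ permutes $(\nu_1,\nu_2,\nu_3,\nu_4)$ to $(\nu_3,\nu_4,\nu_1,\nu_2)$, or its inverse, and fixes $\kappa_i,f,g$. Conjugating $\mu_{12}$ by it therefore yields the same formula with the indices $1,2$ replaced by $3,4$. The other two identities are the same argument shifted by two indices. So the claim reduces to the statement $T=\Xi T_1$.

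Next I compose $\Xi$ from (\ref{eq:E8Xi}) with $T_1$. I use the convention of the paper, under which compositions act on functions by substitution: $(\Xi T_1)(x)$ is $T_1(x)$ with each variable $y$ replaced by $\Xi(y)$. The key observation is that $\Xi(v)=v^{-1}$. Indeed, with $u=\kappa_1/(q\kappa_2)=v^{-1}$,
\[
\Xi(v) = \frac{q\,u^2\kappa_2}{u^2\kappa_1} = v ^{-1}.
\]
Then I check each parameter:
\[
T(\nu_i) = \Xi(\nu_i v) = u\nu_i \cdot v^{-1}\cdot{}^{\!-1}\!\!\;
\]
more precisely $T(\nu_i)=\Xi(\nu_i)\,\Xi(v)=u\nu_i\cdot u^{-1}=\nu_i$. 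Similarly
\[
T(\kappa_1) = \Xi(\kappa_1)\,\Xi(v)^2/q = u^2\kappa_1 \cdot u^{-2}/q = \kappa_1/q ,
\]
and $T(\kappa_2)=q\kappa_2$ by the same computation. This gives (\ref{eq:E8T}).

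For $f$ and $g$ I need $T(f)=\Xi(\overline{f})\,\Xi(v)$, where $\overline{f}$ is the rational function of $(f,g,\nu,\kappa)$ determined by (\ref{eq:qPE18}). Here lies the main obstacle: one must check that applying $\Xi$ to that rational function gives $u^{-1}\overline{f}$ evaluated at the original variables, up to the factor $\Xi(v)$. This would be automatic if $\Xi$ acted as the scaling $x\mapsto \lambda x$ on every variable, with $\lambda$ fixed. It does not, because $\lambda=u$ itself depends on $\kappa_1,\kappa_2$. The way around this is to note that $u$ transforms as $\Xi(u)=u^{-1}$, so $\Xi$ is an involutive rescaling. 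I would also use that (\ref{eq:rT1act}) already records $T_1(f)=v\overline{f}$ as an identity of rational functions.

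The substitution then works as follows. Write $\overline{f}=F(f,g;\nu,\kappa_1,\kappa_2)$. Equation (\ref{eq:qPE18}) is homogeneous under the simultaneous scaling of $f,g,\nu_i$ by $\lambda$ and of $\kappa_1,\kappa_2$ by $\lambda^2$. One can see this because numerator and denominator scale uniformly, and $m_k$ has degree $k$. Consequently
\[
F(\lambda f,\lambda g;\lambda\nu,\lambda^2\kappa)=\lambda F .
\]
Applying this with $\lambda=u$ gives
\[
T(f)=u\overline{f}\cdot u^{-1}=\overline{f},
\]
and $T(g)=\overline{g}$ follows in the same way from the second equation. Verifying this homogeneity explicitly, especially for $P_n$ and $P_d$ (where $h$ must carry degree $2$), is the step I expect to need the most care.
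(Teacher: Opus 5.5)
Your proof is correct and follows the paper's route: the word in (\ref{eq:E8TXis}) is $(s_1 r)^2=T_1$, Yamada's formulas (\ref{eq:rT1act}) show that $T_1$ is the time evolution followed by the weighted rescaling by $v=q\kappa_2/\kappa_1$, and $\Xi$ undoes that rescaling; your degree count on (\ref{eq:qPE18}), which the paper leaves implicit, gives $\Xi(\overline{f})=u\overline{f}$ and $\Xi(\overline{g})=u\overline{g}$. Correct the side remarks, though: in fact $\Xi(v)=v=u^{-1}$ (this is the value your computations actually use), and $\Xi(u)=u$, so $\Xi$ is not an involution ($\Xi^2(\nu_i)=u^2\nu_i$); no such property is needed anyway, because $\Xi$ substitutes all variables simultaneously, so the identity $F(\lambda f,\lambda g;\lambda\nu,\lambda^2\kappa)=\lambda F$ in an indeterminate $\lambda$ can simply be specialized to $\lambda=u$.
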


\section{Equivalence in representations and $q$-Painlev\'e equations} \label{sec:equiv}

In order to investigate the representations of the extended affine Weyl groups in Section \ref{sec:prel}, we introduce the equivalences on the variables in the representations.
The equivalence is an evidence of the reducibility of the representations.
The difference between the time evolution of the $q$-Painlev\'e equation and the action of the parallel translation of the corresponding Weyl group in Sections \ref{sec:WE6}--\ref{sec:WE8} can be interpreted by the equivalence.

\subsection{The case $E^{(1)}_6 $} \label{sec:E16}

In connection with the representation of the extended affine Weyl group $\widetilde{W}(E^{(1)}_{6})$ given in equation (\ref{eq:AffineWeylActionE6}), we introduce an equivalence of the parameters as follows.
\begin{align}
&  (\nu '_1 , \nu '_2 , \nu '_3 , \nu '_4 ,\nu '_5 ,\nu '_6 , \nu '_7 ,\nu '_8 ; \kappa '_1 ,\kappa '_2 ; f' ,g' ) \sim  (\nu_1 , \nu_2 , \nu_3 , \nu_4 ,\nu_5 ,\nu_6 , \nu_7 ,\nu_8 ;\kappa_1 ,\kappa_2 ; f ,g ) \label{eq:equivE6} \\
& \Leftrightarrow \mbox{There exist } a, b, c \in \C (\nu_1 , \nu_2 , \cdots ,\nu_8 ,\kappa_1 ,\kappa_2 , f ,g ) \setminus \{ 0 \} \mbox{ such that }  \nonumber \\
& \qquad (\nu '_1 , \nu '_2 , \nu '_3 , \nu '_4 ,\nu '_5 ,\nu '_6 , \nu '_7 ,\nu '_8 ; \kappa '_1 ,\kappa '_2 ; f' ,g' )  \nonumber \\
& \qquad \qquad = (a \nu_1 , a \nu_2 , a \nu_3 , a \nu_4 ,b \nu_5 , b \nu_6 , c \nu_7 , c \nu_8 ; ac \kappa_1 , ab \kappa_2 ; af ,g/a) .  \nonumber
\end{align}
An explanation why the equivalence is defined as above is given in the appendix, that is related to a symmetry of the linear $q$-difference equation corresponding to the $q$-Painlev\'e equation $q$-$P(E^{(1)}_6)$.

We have the following statement on the representation of $\widetilde{W}(E^{(1)}_{6})$ in equation (\ref{eq:AffineWeylActionE6}) and the equivalence.
\begin{thm} \label{thm:E6compati}
Let $(\nu '_1 , \nu '_2 , \nu '_3 , \nu '_4 ,\nu '_5 ,\nu '_6 , \nu '_7 ,\nu '_8 ; \kappa '_1 ,\kappa '_2 ; f' ,g' ) $ be a tuple of the parameters which is equivalent to $ (\nu_1 , \nu_2 , \nu_3 , \nu_4 ,\nu_5 ,\nu_6 , \nu_7 ,\nu_8 ;\kappa_1 ,\kappa_2 ; f ,g )$.
The actions of the generators of  $\widetilde{W}(E^{(1)}_6)$ in equation (\ref{eq:AffineWeylActionE6}) to $(\nu '_1 , \nu '_2 , \nu '_3 , \nu '_4 ,\nu '_5 ,\nu '_6 , \nu '_7 ,\nu '_8 ; \kappa '_1 ,\kappa '_2 ; f' ,g' ) $ are written as the same form of the actions to $(\nu _1 , \nu _2 , \nu _3 , \nu _4 ,\nu _5 ,\nu _6 , \nu _7 ,\nu _8 ; \kappa _1 ,\kappa _2 ; f ,g ) $ up to the equivalence.
\end{thm}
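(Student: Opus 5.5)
The plan is a generator-by-generator verification. Write $X=(\nu_1,\dots,\nu_8;\kappa_1,\kappa_2;f,g)$ and $X'=(a\nu_1,\dots,a\nu_4,b\nu_5,b\nu_6,c\nu_7,c\nu_8;ac\kappa_1,ab\kappa_2;af,g/a)$. For each generator $w\in\{s_0,\dots,s_6,\pi_1,\pi_2\}$ of equation (\ref{eq:AffineWeylActionE6}) we apply the formula for $w$ to $X'$ and show that the result equals $(a'\,w(\nu_1),\dots;a'c'\,w(\kappa_1),a'b'\,w(\kappa_2);a'w(f),w(g)/a')$ for suitable nonzero $a',b',c'$. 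The new scalars are to be expressed rationally in $a,b,c$ and in the original variables. Since the relation in equation (\ref{eq:equivE6}) is evidently an equivalence relation (the admissible rescalings form a group under composition), this generator check gives the statement. It also gives the version for arbitrary words in $\widetilde{W}(E^{(1)}_6)$ by induction on word length.

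The permutation generators $s_0,s_1,s_3,s_4,s_5$ only permute $\nu$'s within the blocks $\{\nu_1,\dots,\nu_4\}$, $\{\nu_5,\nu_6\}$ and $\{\nu_7,\nu_8\}$, each of which is scaled by a common factor. For these we take $(a',b',c')=(a,b,c)$.

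For $s_2$, the invariant combinations are $\kappa_2/(\nu_1\nu_6)$, $\nu_1 g$, $\nu_6/\kappa_2\cdot f$-type products and $fg$. The images $\kappa_2'/\nu_6'=a\kappa_2/\nu_6$ and $\kappa_2'/\nu_1'=b\,\kappa_2/\nu_1$ force $(a',b',c')=(a,b,c)$, with $\kappa_1'\mapsto ac\,\kappa_1\kappa_2/(\nu_1\nu_6)$. For $f$ we check that the fraction $\kappa_2(\nu_1g-1)/((\nu_1\nu_6-\kappa_2)fg+\nu_1\kappa_2g-\nu_1\nu_6)$ is homogeneous of degree $0$ under the scaling. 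Indeed $\nu_1g$, $fg$, $\nu_1\nu_6/\kappa_2$ and $\nu_1 g$ are all invariant, so after dividing numerator and denominator by $\kappa_2$ the fraction is unchanged, and $f'\mapsto a\,s_2(f)$. The generator $s_6$ is handled the same way with $(a,b,c)$ kept. Here $\kappa_1/\nu_7$ scales by $a$, $\kappa_1/\nu_1$ scales by $c$, and $\kappa_2\mapsto ab\,\kappa_1\kappa_2/(\nu_1\nu_7)$. The multiplier $\nu_7(\nu_1-f)/(\kappa_1-\nu_7f+(\nu_1\nu_7-\kappa_1)fg)$ picks up exactly the factor $c\cdot a/(ac)=1$, so $g'\mapsto s_6(g)/a$.

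The Dynkin automorphisms need genuinely new scalars. For $\pi_2$ the blocks swap and invert. The new $f$ is $g/a$ and the new $g$ is $af$, so we need $a'=1/a$; then $\nu_i\mapsto1/(a\nu_i)=a'\cdot(1/\nu_i)$ for $i\le4$. The block $\{\nu_5,\nu_6\}\mapsto\{1/(c\nu_8),1/(c\nu_7)\}$ gives $b'=1/c$, and $c'=1/b$ similarly. The check $\kappa_1\mapsto1/(ab\kappa_2)=a'c'\cdot(1/\kappa_2)$ is consistent, as is the analogous check for $\kappa_2$.

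For $\pi_1$, $g\mapsto\kappa_2 g$ becomes $ab\kappa_2\cdot g/a=b\,\kappa_2 g$, so $a'=1/b$. Then $\nu_1\mapsto\nu_2'/\kappa_2'=\nu_2/(b\kappa_2)$ fits $a'$. Also $\nu_3\mapsto1/(b\nu_6)$ fits $a'$, $\nu_5\mapsto1/(a\nu_4)$ gives $b'=1/a$, and $\nu_7\mapsto1/(c\nu_7)$ gives $c'=1/c$. The checks $\kappa_1\mapsto a^2\nu_1\nu_2/(a^2bc\,\kappa_1\kappa_2)=a'c'\cdot(\cdot)$ and $\kappa_2\mapsto1/(ab\kappa_2)=a'b'\cdot(\cdot)$ both hold. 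The remaining point is the image of $f$: we must verify that $\nu_1\nu_2(1-fg)/(\kappa_2\{\nu_1\nu_2g+f-(\nu_1+\nu_2)fg\})$ scales by $1/b$. Using invariance of $fg$, the rescaling $\nu_i\nu_j g\to a\,\nu_i\nu_j g$, $f\to af$ and $\nu_i fg\to a\,\nu_i fg$, the braces scale by $a$, the numerator by $a^2$ and $\kappa_2$ by $ab$, giving total factor $1/b$. The relation $\kappa_1^2\kappa_2^2=q\prod\nu_i$ must also be compatible: both sides scale by $a^4b^2c^2$, so the equivalence preserves it.

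The main obstacle is nothing conceptual. It is keeping the bookkeeping of $(a',b',c')$ straight for $\pi_1$, whose action mixes the blocks and involves $\kappa_2$ in the $\nu_1,\nu_2$ images. We must also make sure that every rational expression (especially the $f$-images) is homogeneous with precisely the required weight; each check above is a short degree count rather than a full expansion.
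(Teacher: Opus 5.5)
Your proposal is correct and follows essentially the paper's proof: a generator-by-generator check that each formula of the representation is homogeneous of the right weight under the rescaling by $(a,b,c)$. The only difference is bookkeeping. You compare the formula evaluated at $X'$ with $w(X)$ and get scalars such as $(1/b,1/a,1/c)$ for $\pi_1$ and $(1/a,1/c,1/b)$ for $\pi_2$, whereas the paper applies $w$ as a field automorphism to the primed entries and gets $(\pi_1(a)b,\pi_1(b)a,\pi_1(c)c)$ and $(\pi_2(a)a,\pi_2(b)c,\pi_2(c)b)$; the two agree once you compose with the trivial equivalence $w(X')\sim w(X)$ given by the scalars $(w(a),w(b),w(c))$.
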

\begin{proof}
By the assumption, there exist $ a, b, c \in \C (\nu_1 , \nu_2 , \cdots ,\nu_8 ,\kappa_1 ,\kappa_2 , f ,g ) \setminus \{ 0 \} $ such that 
\begin{align}
&  (\nu '_1 , \nu '_2 , \nu '_3 , \nu '_4 ,\nu '_5 ,\nu '_6 , \nu '_7 ,\nu '_8 ; \kappa '_1 ,\kappa '_2 ; f' ,g' )  \label{eq:equivE6'} \\
& = (a \nu_1 , a \nu_2 , a \nu_3 , a \nu_4 ,b \nu_5 , b \nu_6 , c \nu_7 , c \nu_8 ; ac \kappa_1 , ab \kappa_2 ; af ,g/a). \nonumber
\end{align}
We confirm the theorem on the action of $\pi _1$.
Recall that
\begin{align}
& (\pi _1 ( \nu _1) , \pi _1 (\nu _2) , \pi _1 ( \nu _3) , \pi _1 ( \nu _4) , \pi _1 (\nu _5) , \pi _1 (\nu _6) , \pi _1 ( \nu _7) , \pi _1 (\nu _8) ;  \pi _1 (\kappa _1) , \pi _1 (\kappa _2) ;  \pi _1 (f ) , \pi _1 (g) ) \\
& = ( \frac{\nu  _2 }{\kappa _2}  , \frac{\nu  _1 }{\kappa _2}   , \frac{1}{ \nu _6} , \frac{1}{\nu  _5}  , \frac{1}{\nu  _4}  ,  \frac{1}{\nu  _3}  ,  \frac{1}{\nu _7} , \frac{1}{\nu _8} ; \frac{\nu _1 \nu _2 }{ \kappa_1\kappa _2}  , \frac{1}{ \kappa _2} ; \frac{\nu _1 \nu_2 ( 1- fg )}{ \kappa_2 \{ \nu _1 \nu_2 g + f- (\nu _1 + \nu _2 ) fg \} } , \kappa_2 g  ). \nonumber 
\end{align}
We calculate the action of $\pi _1$ to $\nu '_1 , \nu '_2 , \dots  , f' ,g' $ which satisfy equation (\ref{eq:equivE6'}).
We have
\begin{align}
&  \pi _1 ( \nu '_5 ) = \pi _1 ( b ) \pi _1 (\nu _5 )  =  \frac{\pi _1 ( b ) }{\nu _4} = \frac{\pi _1 ( b ) a }{ \nu '_4 } , \; \pi _1 ( f ') =  \pi _1 ( a ) \pi _1 (f )  \\
&  \quad = \frac{ \pi _1 ( a ) \nu _1 \nu_2 ( 1- fg )}{ \kappa_2 \{ \nu _1 \nu_2 g + f- (\nu _1 + \nu _2 ) fg \} }  = \frac{ \pi _1 ( a ) b \nu '_1 \nu '_2 ( 1- f'g' )}{ \kappa ' _2 \{ \nu '_1 \nu'_2 g' + f' - (\nu '_1 + \nu '_2 ) f' g' \} }  , \nonumber
\end{align}
and so on.
It follows from a direct calculation that
\begin{align}
& (\pi _1 ( \nu '_1) , \pi _1 (\nu '_2) , \pi _1 ( \nu '_3) , \pi _1 ( \nu '_4) , \dots , \pi _1 (\nu '_8) ; \pi _1 (\kappa '_1) , \pi _1 (\kappa '_2) ;  \pi _1 (f' ) , \pi _1 (g') ) \\
& = ( \pi _1 ( a ) b \frac{ \nu '_2 }{ \kappa ' _2} , \pi _1 ( a ) b \frac{ \nu '_1 }{\kappa ' _2} ,  \frac{ \pi _1 ( a ) b  }{ \nu '_6 }  , \frac{ \pi _1 ( a ) b  }{ \nu '_5 } , \frac{  \pi _1 ( b ) a }{ \nu '_4 } , \frac{ \pi _1 ( b ) a }{ \nu '_3} , \frac{ \pi _1 (c ) c }{ \nu '_7} , \frac{ \pi _1 ( c ) c }{ \nu '_8} ; \nonumber \\
& \qquad \pi _1 ( a c) b c  \frac{  \nu '_1 \nu '_2 }{ \kappa '_1\kappa '_2}  , \frac{ \pi _1 ( ab ) ab }{\kappa '_2 }  ; \pi _1 ( a ) b  \frac{\nu '_1 \nu '_2 ( 1- f'g' )}{ \kappa ' _2 \{ \nu '_1 \nu'_2 g' + f' - (\nu '_1 + \nu '_2 ) f' g' \} } , \frac{\kappa '_2 g' }{\pi _1 ( a ) b }  ) , \nonumber 
\end{align} 
which is equivalent to
\begin{align}
& ( \frac{ \nu '_2 }{ \kappa ' _2} , \frac{ \nu '_1 }{\kappa ' _2} ,  \frac{1 }{ \nu '_6 }  , \frac{ 1 }{ \nu '_5 } , \frac{1 }{ \nu '_4 } , \frac{ 1}{ \nu '_3} , \frac{ 1 }{ \nu '_7} , \frac{ 1 }{ \nu '_8} ;  \frac{  \nu '_1 \nu '_2 }{ \kappa '_1\kappa '_2}  , \frac{ 1 }{\kappa '_2 }  ;   \frac{\nu '_1 \nu '_2 ( 1- f'g' )}{ \kappa ' _2 \{ \nu '_1 \nu'_2 g' + f' - (\nu '_1 + \nu '_2 ) f' g' \} } , \kappa '_2 g'  ) ,
\end{align} 
where the parameters $\pi _1 ( a ) b  ,  \pi _1 ( b ) a ,  \pi _1 ( c ) c$ correspond to the parameters $a,b,c $ in equation (\ref{eq:equivE6}).
Therefore, the action of $\pi _1 $ to $(\nu '_1 , \nu '_2 ,$ $ \nu '_3 , \nu '_4 ,\nu '_5 ,\nu '_6 , \nu '_7 ,\nu '_8 ; \kappa '_1 ,\kappa '_2 ; f' ,g' ) $ is written as the same form of the action to $(\nu _1 , \nu _2 , \nu _3 ,$ $ \nu _4 ,\nu _5 ,\nu _6 , \nu _7 ,\nu _8 ; \kappa _1 ,\kappa _2 ; f ,g ) $ up to the equivalence.
It is shown similarly that the action of the other generators of $\widetilde{W}(E^{(1)}_{6})$ to $(\nu '_1 , \nu '_2 ,$ $ \nu '_3 , \nu '_4 ,\nu '_5 ,\nu '_6 , \nu '_7 ,\nu '_8 ; \kappa '_1 ,\kappa '_2 ; f' ,g' ) $ is written as the same form of the action to $(\nu _1 , \nu _2 , \nu _3 ,$ $ \nu _4 ,\nu _5 ,\nu _6 , \nu _7 ,\nu _8 ; \kappa _1 ,\kappa _2 ; f ,g ) $ up to the equivalence.
We note crucial equations on the actions of some generators of $\widetilde{W}(E^{(1)}_{6})$.
\begin{align}
& (\pi _2 ( \nu _1) , \pi _2 (\nu _2) , \pi _2 ( \nu _3) , \pi _2 ( \nu _4) , \dots , \pi _2 (\nu _8) ;  \pi _2 (\kappa _1) , \pi _2 (\kappa _2) ;  \pi _2 (f ) , \pi _2 (g) ) \\
& = (\frac{1}{\nu_1}, \frac{1}{\nu_2}, \frac{1}{\nu_3}, \frac{1}{\nu_4}, \frac{1}{\nu_8} , \frac{1}{\nu_7}, \frac{1}{\nu_6}, \frac{1}{\nu_5} ; \frac{1}{\kappa_2} , \frac{1}{\kappa_1}  ; g , f ),  \nonumber \\
& (\pi _2 ( \nu '_1) , \pi _2 (\nu '_2) , \pi _2 ( \nu '_3) , \pi _2 ( \nu '_4) , \dots , \pi _2 (\nu '_8) ;  \pi _2 (\kappa '_1) , \pi _2 (\kappa '_2) ;  \pi _2 (f' ) , \pi _2 (g') ) \nonumber \\
& =  (\frac{\pi _2 ( a ) a }{\nu ' _1}, \frac{\pi _2 ( a ) a }{\nu  '_2}, \frac{\pi _2 ( a ) a }{\nu  '_3}, \frac{\pi _2 ( a ) a }{\nu  '_4}, \frac{\pi _2 ( b ) c }{\nu  '_8} , \frac{\pi _2 ( b ) c }{\nu  '_7}, \frac{\pi _2 ( c ) b }{\nu  '_6}, \frac{\pi _2 ( c ) b }{\nu  '_5} ; \nonumber \\
& \qquad   \frac{\pi _2 ( a c ) a b }{\kappa  '_2} , \frac{\pi _2 ( a b ) a c }{\kappa  '_1 } ; \pi _2 ( a ) a g ' ,  \frac{f ' }{\pi _2 ( a ) a}) . \nonumber
\end{align} 
\begin{align}
& (s _2 ( \nu _1) , s _2 (\nu _2) , s _2 ( \nu _3) , s _2 ( \nu _4) , \dots , s _2 (\nu _8) ;  s _2 (\kappa _1) , s _2 (\kappa _2) ;  s _2 (f ) , s _2 (g) ) \\
& =  (\frac{\kappa_2 }{\nu_6 }, \nu_2 , \nu_3 , \nu_4 ,\nu_5 ,\frac{\kappa_2 }{\nu_1} , \nu_7 ,\nu_8 ; \frac{\kappa_1 \kappa_2 }{\nu_1 \nu_6} ,\kappa_2 ;  f\, \frac{\kappa_2 ( \nu_1 g-1)  }{ (\nu _1 \nu_6 - \kappa_2  )fg + \nu _1 \kappa_2 g - \nu _1 \nu_6 } ,g ) , \nonumber \\
& (s _2 ( \nu '_1) , s _2 (\nu '_2) , s _2 ( \nu '_3) , s _2 ( \nu '_4) , \dots , s _2 (\nu '_8) ;  s _2 (\kappa '_1) , s _2 (\kappa '_2) ; s _2 (f' ) , s _2 (g') ) \nonumber \\
& =  (\frac{s _2 (a)}{a} \frac{\kappa '_2 }{\nu '_6} , \frac{s _2 (a)}{a} \nu '_2 , \frac{s _2 (a)}{a} \nu '_3 , \frac{s _2 (a)}{a} \nu '_4 , \frac{s _2 (b)}{b} \nu '_5 , \frac{s _2 (b)}{b} \frac{\kappa '_2 }{\nu '_1} , \frac{s _2 (c)}{c} \nu '_7 , \frac{s _2 (c)}{c} \nu '_8 ; \nonumber \\
& \qquad \frac{s _2 (ac)}{ac} \frac{\kappa '_1 \kappa '_2 }{\nu '_1 \nu '_6} , \frac{s _2 (ab)}{ab} \kappa '_2 ; \frac{s _2 (a)}{a} f' \frac{\kappa '_2 ( \nu '_1 g' -1)}{(\nu ' _1 \nu '_6 - \kappa '_2  ) f' g' + \nu ' _1 \kappa '_2 g' - \nu ' _1 \nu '_6 } , \frac{a}{s _2 (a)} g' ) . \nonumber 
\end{align} 
\begin{align}
& (s _6 ( \nu _1) , s _6 (\nu _2) , s _6 ( \nu _3) , s _6 ( \nu _4) , \dots , s _6 (\nu _8) ;  s _6 (\kappa _1) , s _6 (\kappa _2) ;  s _6 (f ) , s _6 (g) ) \\
& =  (\frac{\kappa_1 }{\nu_7} , \nu_2 , \nu_3 , \nu_4 ,\nu_5 ,\nu_6 , \frac{\kappa_1 }{\nu_1} ,\nu_8 ; \kappa_1, \frac{\kappa_1 \kappa_2 }{\nu_1 \nu _7 } ;  f, \frac{\nu _7 ( \nu_1 - f ) g }{ \kappa_1  -  \nu _7 f + ( \nu _1 \nu_7 - \kappa_1 )fg } ) , \nonumber \\
& (s _6 ( \nu '_1) , s _6 (\nu '_2) , s _6 ( \nu '_3) , s _6 ( \nu '_4) , \dots , s _6 (\nu '_8) ; s _6 (\kappa '_1) , s _6 (\kappa '_2) ; s _6 (f' ) , s _6 (g') ) \nonumber \\& =  (\frac{s _6 (a)}{a} \frac{\kappa '_1 }{\nu '_7} , \frac{s _6 (a)}{a} \nu '_2 , \frac{s _6 (a)}{a} \nu '_3 , \frac{s _6 (a)}{a} \nu '_4 , \frac{s _6 (b)}{b} \nu '_5  , \frac{s _6 (b)}{b} \nu '_6 , \frac{s _6 (c)}{c} \frac{\kappa '_1 }{\nu '_1} , \frac{s _6 (c)}{c} \nu '_8 ; \nonumber \\
& \qquad \frac{s _6 (ac)}{ac} \kappa '_1, \frac{s _6 (ab)}{ab} \frac{\kappa '_1 \kappa '_2 }{\nu '_1 \nu '_7 } ;  \frac{s _6 (a)}{a} f', \frac{a}{s _6 (a)} \frac{\nu '_7 ( \nu '_1 - f' ) g' }{ \kappa '_1  -  \nu '_7 f' + ( \nu '_1 \nu '_7 - \kappa '_1 )f'g' } ) . \nonumber 
\end{align} 
\end{proof}
The operator $\Xi $, whose action is described in equations (\ref{eq:E6Xinu}) and (\ref{eq:E6Xifg}), is related to the definition of the equivalence in equation (\ref{eq:equivE6}).
Namely, we have
\begin{align}
& ( \Xi (\nu_1) , \Xi (\nu_2 ) , \Xi (\nu_3 ) , \Xi (\nu_4 ) , \Xi (\nu_5 ) , \Xi (\nu_6 ) , \Xi (\nu_7 ) , \Xi (\nu_8 ) ; \Xi (\kappa_1) , \Xi (\kappa_2 ) ; \Xi (f ) , \Xi (g ) )  \\ 
& \sim (\nu_1 , \nu_2 , \nu_3 , \nu_4 ,\nu_5 ,\nu_6 , \nu_7 ,\nu_8 ;\kappa_1 ,\kappa_2 ; f ,g ) , \nonumber
\end{align}
where the parameters $a,b,c$ in equation (\ref{eq:equivE6}) is written as 
\begin{align}
& a = \frac{ \nu_ 5 \nu_ 6 }{\kappa_2 } , \; b = \frac{\kappa_1 }{q \kappa_2 } , \; c = \frac{\kappa_1 }{q \nu_7 \nu_8 } 
\end{align}
for this case.
We obtain the following theorem by reformulating the setting of the operators $\Xi $ and $T$.
\begin{thm}
Let $\Xi $ be the transformation of the parameters defined by
\begin{align}
\Xi  & : (\nu_1 , \nu_2 , \nu_3 , \nu_4 ,\nu_5 ,\nu_6 , \nu_7 ,\nu_8 ;\kappa_1 ,\kappa_2 ; f ,g) \label{eq:E6Xi} \\
& \mapsto \; \Bigl( \frac{ \nu_ 5 \nu_ 6 }{\kappa_2 } \nu _1 , \frac{ \nu_ 5 \nu_ 6 }{\kappa_2 }  \nu _2 , \frac{ \nu_ 5 \nu_ 6 }{\kappa_2 }  \nu _3 , \frac{ \nu_ 5 \nu_ 6 }{\kappa_2 }  \nu _4 , \frac{\kappa_1 }{q \kappa_2 } \nu _5,\frac{\kappa_1 }{q \kappa_2 } \nu _6 , \nonumber \\
& \qquad \frac{\kappa_1 }{q \nu _7 \nu_8 } \nu _7, \frac{\kappa_1 }{q \nu_7 \nu _8} \nu _8 ;  \frac{\kappa _1 \nu_5 \nu_6 }{q \kappa_2 \nu_7 \nu_8 } \kappa _1 ,\frac{\kappa_1 \nu_5 \nu_6 }{q \kappa_2^2} \kappa _2   ; \frac{ \nu_ 5 \nu_ 6 }{\kappa_2 } f, \frac{\kappa_2 }{ \nu_ 5 \nu_ 6 } g \Bigr) \nonumber
\end{align}
and set $T= \Xi \cdot (\pi _1 \pi _2 s_4 s_5 s_3 s_6 s_4 s_3 s_0 s_6 )^2 $.
Then, the operator $T$ coincides with the time evolution of $q$-$P(E^{(1)}_6)$ given in equation (\ref{eq:qPE16}).
The transformation $\Xi $ is within the equivalence of the parameters in equation (\ref{eq:equivE6}).
\end{thm}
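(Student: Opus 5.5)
The plan is to recover the statement from the computations in Section~\ref{sec:WE6}, reorganized so that $T$ is defined from $\Xi$ rather than the other way round. There are two claims: that $T=\Xi\, s^2$ with $s=\pi _1 \pi _2 s_4 s_5 s_3 s_6 s_4 s_3 s_0 s_6$ is the time evolution (\ref{eq:qPE16}), and that $\Xi$ lies within the equivalence (\ref{eq:equivE6}).

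\textbf{Action on the parameters.} We take the explicit formulas (\ref{eq:s^2E6}) for $s^2$ as given. Since these operators act on functions, composition means that $T(x)=\Xi(s^2(x))$ is obtained by substituting the $\Xi$-images of the variables into the rational expression $s^2(x)$. For example,
\[
T(\nu_1)=\frac{q\,\Xi(\nu_1)\Xi(\nu_7)\Xi(\nu_8)}{\Xi(\kappa_1)} ,
\]
and using (\ref{eq:E6Xi}) the factors $\nu_5\nu_6/\kappa_2$, $\kappa_1/q$ and $q\kappa_2\nu_7\nu_8/(\kappa_1^2\nu_5\nu_6)$ multiply to $1$, so $T(\nu_1)=\nu_1$. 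The same check for $\nu_2,\dots,\nu_8,\kappa_1,\kappa_2$ should reproduce (\ref{eq:E6T}), namely $T(\nu_i)=\nu_i$, $T(\kappa_1)=\kappa_1/q$ and $T(\kappa_2)=q\kappa_2$. This also shows that (\ref{eq:E6Xi}) agrees with (\ref{eq:E6Xinu}).

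\textbf{Action on $f$ and $g$.} Next we compute $T(f)=\Xi(s^2(f))$ from the formula for $1/s^2(f)$, with $\Xi(f)=\alpha f$ and $\Xi(g)=g/\alpha$, where $\alpha=\nu_5\nu_6/\kappa_2$.
\begin{itemize}
\item The products $fg$ and $\nu_i g$ ($i\le 4$) are invariant under $\Xi$.
\item The quotient $\nu_5/\kappa_2$ is scaled by $1/\alpha$, exactly as $g$ is.
\end{itemize}
Hence the bracket in $1/s^2(f)$ is rescaled by $1/\alpha$. The prefactor satisfies
\[
\Xi\!\left(\frac{\kappa_1}{q\nu_7\nu_8}\right)=\alpha\,\frac{q\kappa_2}{\kappa_1} .
\]
So we expect
\[
\frac{1}{T(f)}=\frac{q\kappa_2}{\kappa_1}\Bigl\{ g+\frac{f(g-1/\nu_1)\cdots(g-1/\nu_4)}{(1-fg)(g-\nu_5/\kappa_2)(g-\nu_6/\kappa_2)}\Bigr\} .
\]
We then clear denominators to check that this is equivalent to the first equation of (\ref{eq:E6TfTg}). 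The second equation should be handled the same way, by applying $\Xi$ to the formula for $1/s^2(g)$, which is written in terms of $s^2(f)$, $s^2(\nu_i)$ and $s(\kappa_2)$. This gives an expression in $T(f)$ and the $T(\nu_i)$, which we compare with the second line of (\ref{eq:E6TfTg}); the factor $T(\kappa_1/\nu_7)$ appears there.

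\textbf{Expected obstacle and the equivalence.} The main obstacle is this last identification. It needs $\Xi(s(\kappa_2))=\Xi(q\nu_7\nu_8\kappa_2/\kappa_1)$ together with the quartic and quadratic factors, and then an algebraic manipulation showing that the implicit relation given by $1/s^2(g)$ is the same as the biquadratic relation of $q$-$P(E^{(1)}_6)$. I would first substitute the expression for $T(f)$ obtained above into both relations and compare the results as rational functions in $g$. Since the paper already records (\ref{eq:E6Xifg}) as a consequence of (\ref{eq:s^2E6}) and (\ref{eq:E6TfTg}), these identities may alternatively be cited. Finally, the equivalence claim is immediate: take
\[
a=\frac{\nu_5\nu_6}{\kappa_2},\qquad b=\frac{\kappa_1}{q\kappa_2},\qquad c=\frac{\kappa_1}{q\nu_7\nu_8}
\]
in (\ref{eq:equivE6}). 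Then $ac=\kappa_1\nu_5\nu_6/(q\kappa_2\nu_7\nu_8)$ and $ab=\kappa_1\nu_5\nu_6/(q\kappa_2^2)$, which match the $\kappa$-components of (\ref{eq:E6Xi}), and $f\mapsto af$, $g\mapsto g/a$ as required.
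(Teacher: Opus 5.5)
Your strategy is sound and is the paper's argument run forwards. The paper simply observes that (\ref{eq:E6Xi}) is the operator $Ts^{-2}$ already computed in (\ref{eq:E6Xinu}) and (\ref{eq:E6Xifg}), and your $a,b,c$ for the equivalence are exactly the paper's. The parameter check also gives the right outcome. However, the factors you list for $T(\nu_1)$ multiply to $\nu_7\nu_8/\kappa_1$, not $1$: the middle factor should be $q\,\Xi(\nu_7)\Xi(\nu_8)=\kappa_1^2/(q\nu_7\nu_8)$ rather than $\kappa_1/q$.

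The real problem is the prefactor in the $f$-step, and as written it makes your central check fail. Since $\Xi(\kappa_1)=\kappa_1^2\nu_5\nu_6/(q\kappa_2\nu_7\nu_8)$ and $\Xi(\nu_7)\Xi(\nu_8)=\kappa_1^2/(q^2\nu_7\nu_8)$, you get
\[
\Xi\Bigl(\frac{\kappa_1}{q\nu_7\nu_8}\Bigr)=\frac{\kappa_1^2\nu_5\nu_6}{q\kappa_2\nu_7\nu_8}\cdot\frac{q\nu_7\nu_8}{\kappa_1^2}=\frac{\nu_5\nu_6}{\kappa_2}=\alpha .
\]
In terms of (\ref{eq:equivE6}) this is $(a/c)\cdot\kappa_1/(q\nu_7\nu_8)=a$, not $\alpha q\kappa_2/\kappa_1$. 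With your value, the ``expected'' formula $1/T(f)=\frac{q\kappa_2}{\kappa_1}\{\cdots\}$ is \emph{not} equivalent to the first line of (\ref{eq:E6TfTg}). Write $R$ for its right-hand side and divide that line by $T(f)$: it reads $(g-1/T(f))(fg-1)=fR$, i.e.\ $1/T(f)=g+fR/(1-fg)$, with no prefactor. Clearing denominators would therefore leave a mismatch by the factor $q\kappa_2/\kappa_1$. With the correct prefactor, $\alpha$ cancels the $1/\alpha$ coming from the bracket and you get exactly $1/T(f)=g+fR/(1-fg)$.

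The step you flag as the main obstacle is the same kind of bookkeeping and needs no comparison of biquadratic relations:
\begin{itemize}
\item $\Xi(s(\kappa_2))=\kappa_1/q$ and $\Xi(\kappa_2 g)=\kappa_1 g/q$, so this common factor cancels;
\item $\Xi(s^2(\nu_i))=T(\nu_i)=\nu_i$ for $i\le 4$;
\item $\Xi(\nu_7)=\kappa_1/(q\nu_8)=T(\kappa_1/\nu_8)$ and $\Xi(\nu_8)=T(\kappa_1/\nu_7)$.
\end{itemize}
Applying $\Xi$ to the formula for $1/s^2(g)$ then gives
\[
\frac{1}{T(g)}=T(f)+\frac{g\prod_{i=1}^{4}(T(f)-\nu_i)}{(1-gT(f))(T(f)-T(\kappa_1/\nu_7))(T(f)-T(\kappa_1/\nu_8))},
\]
which is the second line of (\ref{eq:E6TfTg}) solved for $1/T(g)$. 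Citing (\ref{eq:E6Xifg}) instead, as you suggest, is precisely what the paper does. This is not circular, because (\ref{eq:E6Xifg}) was derived from $\Xi=Ts^{-2}$ with $T$ the known time evolution.
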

Note that the expression of $\Xi $ in \cite{TsqP} for the case $E^{(1)}_6 $ contains a mistake, and the transformation $\Xi $ in \cite[Theorem 3.1 (i)]{TsqP} should be replaced by equation (\ref{eq:E6Xi}).

The number of the parameters of Kajiwara-Noumi-Yamada realization is too much, and it may cause the difference between the time evolution and the Weyl group symmetry.
By the equivalence of the parameters, we may regard that the number of the parameters are deduced.
The difference between the time evolution and the Weyl group symmetry can be understood by finding the transformation $\Xi $, which is related to the equivalence.

\subsection{The case $E^{(1)}_7 $} \label{sec:E17}

In connection with the representation of the extended affine Weyl group $\widetilde{W}(E^{(1)}_{7})$ given in equation (\ref{eq:AffineWeylActionE7}), we introduce an equivalence of the parameters as follows.
\begin{align}
&  (\nu '_1 , \nu '_2 , \nu '_3 , \nu '_4 ,\nu '_5 ,\nu '_6 , \nu '_7 ,\nu '_8 ; \kappa '_1 ,\kappa '_2 ; f' ,g' ) \sim  (\nu_1 , \nu_2 , \nu_3 , \nu_4 ,\nu_5 ,\nu_6 , \nu_7 ,\nu_8 ;\kappa_1 ,\kappa_2 ; f ,g ) \label{eq:equivE7} \\
& \Leftrightarrow \mbox{There exist } a, b \in \C (\nu_1 , \nu_2 , \cdots ,\nu_8 ,\kappa_1 ,\kappa_2 , f ,g ) \setminus \{ 0 \} \mbox{ such that }  \nonumber \\
& \qquad (\nu '_1 , \nu '_2 , \nu '_3 , \nu '_4 ,\nu '_5 ,\nu '_6 , \nu '_7 ,\nu '_8 ; \kappa '_1 ,\kappa '_2 ; f' ,g' )  \nonumber \\
& \qquad \qquad = (a \nu_1 , a \nu_2 , a \nu_3 , a \nu_4 ,b \nu_5 , b \nu_6 , b \nu_7 , b \nu_8 ; ab \kappa_1 , ab \kappa_2 ; af ,g/a) .  \nonumber
\end{align}
Note that the definition of the equivalence originates from \cite[(4.7)]{TsqP}.
We have the following statement on the representation of $\widetilde{W}(E^{(1)}_{7})$ in equation (\ref{eq:AffineWeylActionE7}) and the equivalence.
\begin{thm}
Let $(\nu '_1 , \nu '_2 , \nu '_3 , \nu '_4 ,\nu '_5 ,\nu '_6 , \nu '_7 ,\nu '_8 ; \kappa '_1 ,\kappa '_2 ; f' ,g' ) $ be a tuple of the parameters which is equivalent to $ (\nu_1 , \nu_2 , \nu_3 , \nu_4 ,\nu_5 ,\nu_6 , \nu_7 ,\nu_8 ;\kappa_1 ,\kappa_2 ; f ,g )$.
The actions of the generators of  $\widetilde{W}(E^{(1)}_7)$ to $(\nu '_1 , \nu '_2 , \nu '_3 , \nu '_4 ,\nu '_5 ,\nu '_6 , \nu '_7 ,\nu '_8 ; \kappa '_1 ,\kappa '_2 ; f' ,g' ) $ are written as the same form of the actions to $(\nu _1 , \nu _2 , \nu _3 , \nu _4 ,\nu _5 ,\nu _6 , \nu _7 ,\nu _8 ; \kappa _1 ,\kappa _2 ; f ,g ) $ up to the equivalence.
\end{thm}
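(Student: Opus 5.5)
The plan is to follow the proof of Theorem \ref{thm:E6compati} generator by generator, now with two scaling parameters $a,b$ instead of three. By assumption $(\nu'_1,\dots,\nu'_8;\kappa'_1,\kappa'_2;f',g')=(a\nu_1,\dots,a\nu_4,b\nu_5,\dots,b\nu_8;ab\kappa_1,ab\kappa_2;af,g/a)$. For each generator $w\in\{s_0,\dots,s_7,\pi\}$ of equation (\ref{eq:AffineWeylActionE7}) I will do two things. First, write $w(x')=w(\text{scalar})\,w(x)$. Second, substitute back $\nu_i=\nu'_i/a$ $(i\le 4)$, $\nu_j=\nu'_j/b$ $(j\ge 5)$, $\kappa_k=\kappa'_k/(ab)$, $f=f'/a$, $g=ag'$. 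The goal is to exhibit new scalars $\tilde a,\tilde b\in\C(\nu_1,\dots,g)\setminus\{0\}$ such that the resulting tuple equals the formula for $w$ written in primed variables, with $\tilde a$ on the slots $\nu_1,\dots,\nu_4$ and $f$, $\tilde b$ on $\nu_5,\dots,\nu_8$, $\tilde a\tilde b$ on $\kappa_1,\kappa_2$, and $1/\tilde a$ on $g$.

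The simple generators are handled first. For the permutations $s_1,s_2,s_3,s_5,s_6,s_7$, each swaps two $\nu$'s inside the same block $\{1,\dots,4\}$ or $\{5,\dots,8\}$. So $\tilde a=w(a)/a$ and $\tilde b=w(b)/b$ work immediately. For $s_0$, the swap $\kappa_1\leftrightarrow\kappa_2$ respects the common factor $ab$. Moreover $s_0(f')=s_0(a)/g=(s_0(a)/a)\cdot 1/g'$ and $s_0(g')=1/(s_0(a)f)=(a/s_0(a))\cdot 1/f'$, so again $\tilde a=s_0(a)/a$, $\tilde b=s_0(b)/b$. For $\pi$ the two blocks are interchanged, exactly as for $\pi_1$ in the $E^{(1)}_6$ case. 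For instance $\pi(\nu'_1)=\pi(a)/\nu_5=\pi(a)b/\nu'_5$ and $\pi(\nu'_5)=\pi(b)a/\nu'_1$. Also $\pi(f')=\pi(a)f/\kappa_1=\pi(a)b\,f'/\kappa'_1$ and $\pi(g')=\kappa_2 g/\pi(a)=\kappa'_2g'/(\pi(a)b)$. So $\tilde a=\pi(a)b$ and $\tilde b=\pi(b)a$, and one checks $\pi(\kappa'_k)=\pi(ab)ab/\kappa'_k=\tilde a\tilde b/\kappa'_k$. The change $q\mapsto 1/q$ is unaffected, since $q$ is not rescaled.

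The step I expect to be the main obstacle is $s_4$, because of its Möbius-type action on $f$. I would organize it via a bi-grading. Assign weight $(1,0)$ to $\nu_1,\dots,\nu_4$ and to $f$, weight $(0,1)$ to $\nu_5,\dots,\nu_8$, weight $(1,1)$ to $\kappa_1,\kappa_2$, and weight $(-1,0)$ to $g$. The equivalence is precisely the torus action with these weights. The claim with $\tilde a=s_4(a)/a$, $\tilde b=s_4(b)/b$ reduces to checking that each component of $s_4$ is bihomogeneous of the weight of the variable it replaces.

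For the parameters this is direct. For example, $\kappa_2/\nu_5$ has weight $(1,0)$, $\kappa_2/\nu_1$ has weight $(0,1)$, and $\kappa_1\kappa_2/(\nu_1\nu_5)$ has weight $(1,1)$. For $f$, every monomial in the numerator has weight $(2,2)$: $\kappa_2\nu_1\nu_5fg$, $\kappa_2\kappa_1fg$, $\nu_5\kappa_kf$, $\kappa_1\nu_1\nu_5$, $\kappa_1\kappa_2$. The denominator is $\nu_5$ times a polynomial all of whose terms have weight $(1,1)$, namely $\nu_1\nu_5fg$, $\kappa_2fg$, $\nu_1\kappa_kg$, $\nu_1\nu_5$, $\kappa_1$. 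So the quotient has weight $(1,0)$, as required. I would also note that the relation $\kappa_1^2\kappa_2^2=q\nu_1\cdots\nu_8$ is homogeneous of weight $(4,4)$, so it is preserved by the equivalence.

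This weight bookkeeping, rather than explicit substitution, gives the compact argument. It then applies uniformly to all generators, covering $\pi$ as well once one uses that $\pi$ maps the weight $(m,n)$ to $(-n,-m)$ up to the factor $\pi(\cdot)$.
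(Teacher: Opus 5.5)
Your proposal is correct and follows the paper's proof: a generator-by-generator check with the same rescaling factors. For $s_0,\dots,s_7$ these are $w(a)/a$ and $w(b)/b$, and for $\pi$ they are $b\,\pi(a)$ and $a\,\pi(b)$. Your bi-grading by weights is a tidier way of organizing the direct substitution the paper carries out for $s_4$, $s_0$ and $\pi$.
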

\begin{proof}
The theorem is shown similarly to Theorem \ref{thm:E6compati}.
By the assumption, the parameters $ \nu '_1 , \nu '_2 , \nu '_3 , \nu '_4 ,\nu '_5 ,\nu '_6 , \nu '_7 ,\nu '_8 ; \kappa '_1 ,\kappa '_2 ; f' ,g' $ are written as the form of equation (\ref{eq:equivE7}).

We confirm the theorem on the action of $\pi $.
Recall that
\begin{align}
  & ( \pi(\nu_1), \pi(\nu_2), \pi(\nu_3), \pi(\nu_4), \pi(\nu_5), \pi(\nu_6), \pi(\nu_7), \pi(\nu_8); \pi(\kappa_1), \pi(\kappa_2); \pi(f), \pi(g) )\\
  & = ( \frac{1}{\nu_5}, \frac{1}{\nu_6}, \frac{1}{\nu_7}, \frac{1}{\nu_8}, \frac{1}{\nu_1}, \frac{1}{\nu_2}, \frac{1}{\nu_3}, \frac{1}{\nu_4}, \frac{1}{\kappa_1}, \frac{1}{\kappa_2}; \frac{f}{\kappa_1 }, \kappa_2 g ). \nonumber
\end{align}
On the action of $\pi $, we have
\begin{align}
  & ( \pi(\nu'_1), \pi(\nu'_2), \pi(\nu'_3), \pi(\nu'_4), \pi(\nu'_5), \pi(\nu'_6), \pi(\nu'_7), \pi(\nu'_8);\pi(\kappa'_1), \pi(\kappa'_2); \pi(f'), \pi(g') )\\
  & = ( \frac{b \pi(a)}{\nu'_5}, \frac{b \pi(a) }{\nu'_6}, \frac{b \pi(a) }{\nu'_7}, \frac{b \pi(a) }{\nu'_8}, \frac{a \pi(b)}{\nu'_1}, \frac{a \pi(b)}{\nu'_2}, \frac{a \pi(b)}{\nu'_3}, \frac{a \pi(b)}{\nu'_4}; \nonumber \\
  &  \qquad \frac{ab \pi(ab) }{\kappa'_1}, \frac{ab \pi(ab) }{\kappa'_2}; b \pi(a)\frac{ f'}{\kappa'_1},  \frac{\kappa'_2 g' }{b \pi(a)} ) \nonumber \\
  & \sim ( \frac{1}{\nu'_5}, \frac{1}{\nu'_6}, \frac{1}{\nu'_7}, \frac{1}{\nu'_8}, \frac{1}{\nu'_1}, \frac{1}{\nu'_2}, \frac{1}{\nu'_3}, \frac{1}{\nu'_4}; \frac{1}{\kappa'_1}, \frac{1}{\kappa'_2} ; \frac{f'}{\kappa'_1} , \kappa'_2 g' ). \nonumber 
\end{align}
Therefore, the action of $\pi $ to $(\nu '_1 , \nu '_2 , \nu '_3 , \nu '_4 ,\nu '_5 ,\nu '_6 , \nu '_7 ,\nu '_8 ; \kappa '_1 ,\kappa '_2 ; f' ,g' ) $ is written as the same form of the action to $(\nu _1 , \nu _2 , \nu _3 , \nu _4 ,\nu _5 ,\nu _6 , \nu _7 ,\nu _8 ; \kappa _1 ,\kappa _2 ; f ,g ) $ up to the equivalence.
It is shown similarly that the action of the other generators of $\widetilde{W}(E^{(1)}_{7})$ to $(\nu '_1 , \nu '_2 ,$ $ \nu '_3 , \nu '_4 ,\nu '_5 ,\nu '_6 , \nu '_7 ,\nu '_8 ; \kappa '_1 ,\kappa '_2 ; f' ,g' ) $ is written as the same form of the action to $(\nu _1 , \nu _2 , \nu _3 ,$ $ \nu _4 ,\nu _5 ,\nu _6 , \nu _7 ,\nu _8 ; \kappa _1 ,\kappa _2 ; f ,g ) $ up to the equivalence.
We note crucial equations on the actions of some generators of $\widetilde{W}(E^{(1)}_{7})$.
\begin{align}
  & ( s_4 (\nu_1),  s_4 (\nu_2),  s_4 (\nu_3),  s_4 (\nu_4),  s_4 (\nu_5),  s_4 (\nu_6),  s_4 (\nu_7),  s_4 (\nu_8);  s_4 (\kappa_1),  s_4 (\kappa_2);  s_4 (f),  s_4 (g) )\\
  & = (\frac{\kappa_2}{\nu_5} , \nu_2 , \nu_3 , \nu_4 ,\frac{\kappa_2}{\nu_1} ,\nu_6 , \nu_7 ,\nu_8 ; \frac{\kappa_1\kappa_2}{\nu_1\nu_5} ,\kappa_2 ; \nonumber \\
  &  \qquad \frac{- \kappa_2 ( \nu_1 \nu_5 - \kappa_1 )f g - \nu_5 ( \kappa_1 - \kappa_2 )f + \kappa_1 ( \nu_1 \nu_5 - \kappa_2 )}{\nu _5 \{ - ( \nu_1 \nu_5 - \kappa_2 )f g + \nu_1 ( \kappa_1 - \kappa_2 )g + ( \nu_1 \nu_5 - \kappa_1 ) \}} ,g ) , \nonumber \\
  & (  s_4 (\nu'_1),  s_4 (\nu'_2),  s_4 (\nu'_3),  s_4 (\nu'_4),  s_4 (\nu'_5),  s_4 (\nu'_6),  s_4 (\nu'_7), s_4 (\nu'_8); s_4 (\kappa'_1), s_4 (\kappa'_2); s_4 (f'), s_4 (g') ) \nonumber \\
  & = ( \frac{s _4 (a)}{a} \frac{\kappa '_2}{\nu '_5} , \frac{s _4 (a)}{a} \nu '_2 , \frac{s _4 (a)}{a} \nu '_3 , \frac{s _4 (a)}{a} \nu '_4 , \frac{s _4 (b)}{b} \frac{\kappa '_2}{\nu '_1} , \frac{s _4 (b)}{b} \nu '_6 , \frac{s _4 (b)}{b} \nu '_7 , \frac{s _4 (b)}{b} \nu '_8 ; \frac{s _4 (ab)}{ab} \frac{\kappa '_1 \kappa '_2}{\nu '_1 \nu '_5} , \nonumber \\
  &  \qquad  \frac{s _4 (ab)}{ab} \kappa '_2 ; \frac{s _4 (a)}{a} \frac{- \kappa '_2 ( \nu '_1 \nu '_5 - \kappa '_1 )f' g' - \nu '_5 ( \kappa '_1 - \kappa '_2 )f' + \kappa '_1 ( \nu '_1 \nu '_5 - \kappa '_2 )}{\nu '_5 \{ - ( \nu '_1 \nu '_5 - \kappa '_2 )f' g' + \nu '_1 ( \kappa '_1 - \kappa '_2 )g' + ( \nu '_1 \nu '_5 - \kappa '_1 ) \}} ,  \frac{a}{s _4 (a)} g' ) . \nonumber 
\end{align}
\begin{align}
  & ( s_0 (\nu_1),  s_0 (\nu_2),  s_0 (\nu_3),  s_0 (\nu_4),  s_0 (\nu_5),  s_0 (\nu_6),  s_0 (\nu_7),  s_0 (\nu_8);  s_0 (\kappa_1),  s_0 (\kappa_2);  s_0 (f),  s_0 (g) )\\
  & = (\nu_1 , \nu_2 , \nu_3 , \nu_4 ,\nu_5 ,\nu_6 , \nu_7 ,\nu_8 ;\kappa_2 ,\kappa_1 ; \frac{1}{g} , \frac{1}{f} ), \nonumber \\
  & ( s_0 (\nu'_1),  s_0 (\nu'_2),  s_0 (\nu'_3),  s_0 (\nu'_4),  s_0 (\nu'_5),  s_0 (\nu'_6),  s_0 (\nu'_7), s_0 (\nu'_8); s_0 (\kappa'_1), s_0 (\kappa'_2); s_0 (f'), s_0 (g') ) \nonumber \\
  &  = ( \frac{s _0 (a)}{a} \nu '_1 , \frac{s _0 (a)}{a} \nu '_2 , \frac{s _0 (a)}{a} \nu '_3 , \frac{s _0 (a)}{a} \nu '_4 , \frac{s _0 (b)}{b} \nu '_5 , \frac{s _0 (b)}{b} \nu '_6 , \frac{s _0 (b)}{b} \nu '_7 , \frac{s _0 (b)}{b} \nu '_8 ; \nonumber \\
  &  \qquad \frac{s _0 (ab)}{ab} \kappa '_2 , \frac{s _0 (ab)}{ab} \kappa '_1 ; \frac{s _0 (a)}{a g'}, \frac{a}{s _0 (a)f'}) . \nonumber
\end{align}
\end{proof}
The difference between the time evolution of $q$-$P(E^{(1)}_7)$ and the particular parallel translation of $\widetilde{W}(E^{(1)}_{7})$ was described by $\Xi $ in equation (\ref{eq:E7Xi}), and we have
\begin{align}
& ( \Xi (\nu_1) , \Xi (\nu_2 ) , \Xi (\nu_3 ) , \Xi (\nu_4 ) , \Xi (\nu_5 ) , \Xi (\nu_6 ) , \Xi (\nu_7 ) , \Xi (\nu_8 ) ; \Xi (\kappa_1) , \Xi (\kappa_2 ) ; \Xi (f ) , \Xi (g ) )  \\ 
& \sim (\nu_1 , \nu_2 , \nu_3 , \nu_4 ,\nu_5 ,\nu_6 , \nu_7 ,\nu_8 ;\kappa_1 ,\kappa_2 ; f ,g ) \nonumber
\end{align}
by the equivalence in equation  (\ref{eq:equivE7}), where the parameters $a,b$ in equation (\ref{eq:equivE7}) is written as $ a = b = \kappa_1 /( q \kappa_2 ) $ for this case.
Namely, we have
\begin{prop}
The transformation $\Xi $ in Theorem \ref{thm:TsqPE7} is within the equivalence of the parameters in equation (\ref{eq:equivE7}).
\end{prop}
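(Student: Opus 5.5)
The plan is to exhibit explicit scaling factors $a,b$ in the definition (\ref{eq:equivE7}) that reproduce $\Xi$ of equation (\ref{eq:E7Xi}) exactly. Since $\Xi$ multiplies every $\nu_i$ by the same factor, I take
\begin{align*}
a = b = \frac{\kappa_1}{q\kappa_2},
\end{align*}
which is a nonzero element of $\C(\nu_1,\dots,\nu_8,\kappa_1,\kappa_2,f,g)$. I then compare the images of the twelve entries $\nu_1,\dots,\nu_8;\kappa_1,\kappa_2;f,g$ one by one.

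For the $\nu$'s, (\ref{eq:equivE7}) requires $\nu_i\mapsto a\nu_i$ for $i=1,\dots,4$ and $\nu_i\mapsto b\nu_i$ for $i=5,\dots,8$. With $a=b=\kappa_1/(q\kappa_2)$ both agree with (\ref{eq:E7Xi}).

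For the $\kappa$'s, the equivalence gives $\kappa_j\mapsto ab\,\kappa_j$ with $ab=\kappa_1^2/(q^2\kappa_2^2)$. This is exactly the factor in front of $\kappa_1$ and $\kappa_2$ in (\ref{eq:E7Xi}).

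For $f$ and $g$, the equivalence gives $f\mapsto af=\frac{\kappa_1}{q\kappa_2}f$ and $g\mapsto g/a=\frac{q\kappa_2}{\kappa_1}g$. These agree with the last two entries of (\ref{eq:E7Xi}).

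Hence
$(\Xi(\nu_1),\dots,\Xi(\nu_8);\Xi(\kappa_1),\Xi(\kappa_2);\Xi(f),\Xi(g))\sim(\nu_1,\dots,\nu_8;\kappa_1,\kappa_2;f,g)$,
which is the assertion.

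One point deserves a remark: the underlying variables satisfy $\kappa_1^2\kappa_2^2=q\nu_1\cdots\nu_8$, so any scaling must preserve it. This holds automatically for (\ref{eq:equivE7}): the left side scales by $(ab)^4$ and the right side by $a^4b^4$. So no extra check is needed. There is no real obstacle here; the only care required is to match the $g$-entry with $g/a$ rather than $ag$.
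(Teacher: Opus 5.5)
Your proof is correct and is the paper's argument: the paper also takes $a=b=\kappa_1/(q\kappa_2)$ in (\ref{eq:equivE7}) and observes that $\Xi$ in (\ref{eq:E7Xi}) is exactly the corresponding scaling. Your extra check that $\kappa_1^2\kappa_2^2=q\nu_1\cdots\nu_8$ is preserved is a harmless addition.
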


\subsection{The case $D^{(1)}_5 $} \label{sec:D15}

In connection with the representation of the extended affine Weyl group $\widetilde{W}(D^{(1)}_{5})$ given in equation (\ref{eq:AffineWeylActionD5}), we introduce an equivalence of the parameters as follows.
\begin{align}
&  (\nu '_1 , \nu '_2 , \nu '_3 , \nu '_4 ,\nu '_5 ,\nu '_6 , \nu '_7 ,\nu '_8 ; \kappa '_1 ,\kappa '_2 ; f' ,g' ) \sim  (\nu_1 , \nu_2 , \nu_3 , \nu_4 ,\nu_5 ,\nu_6 , \nu_7 ,\nu_8 ;\kappa_1 ,\kappa_2 ; f ,g ) \label{eq:equivD5} \\
& \Leftrightarrow \mbox{There exist } a, b, c, d  \in \C (\nu_1 , \nu_2 , \cdots ,\nu_8 ,\kappa_1 ,\kappa_2 , f ,g ) \setminus \{ 0 \} \mbox{ such that }  \nonumber \\
& \qquad (\nu '_1 , \nu '_2 , \nu '_3 , \nu '_4 ,\nu '_5 ,\nu '_6 , \nu '_7 ,\nu '_8 ; \kappa '_1 ,\kappa '_2 ; f' ,g' )  \nonumber \\
& \qquad \qquad =  (a \nu_1 , a \nu_2 , b \nu_3 , b \nu_4 ,c \nu_5 , c \nu_6 , d \nu_7 , d \nu_8 ; bd \kappa_1 , ac \kappa_2 ; bf ,g/a)  .  \nonumber
\end{align}
Note that the definition of the equivalence originates from \cite[(2.10), (2.12)]{TsqP}.
We have the following statement on the representation of $\widetilde{W}(D^{(1)}_{5})$ in equation (\ref{eq:AffineWeylActionD5}) and the equivalence.
\begin{thm}
Let $(\nu '_1 , \nu '_2 , \nu '_3 , \nu '_4 ,\nu '_5 ,\nu '_6 , \nu '_7 ,\nu '_8 ; \kappa '_1 ,\kappa '_2 ; f' ,g' ) $ be a tuple of the parameters which is equivalent to $ (\nu_1 , \nu_2 , \nu_3 , \nu_4 ,\nu_5 ,\nu_6 , \nu_7 ,\nu_8 ;\kappa_1 ,\kappa_2 ; f ,g )$.
The actions of the generators of  $\widetilde{W}(D^{(1)}_{5})$ to $(\nu '_1 , \nu '_2 , \nu '_3 , \nu '_4 ,\nu '_5 ,\nu '_6 , \nu '_7 ,\nu '_8 ; \kappa '_1 ,\kappa '_2 ; f' ,g' ) $ are written as the same form of the actions to $(\nu _1 , \nu _2 , \nu _3 , \nu _4 ,\nu _5 ,\nu _6 , \nu _7 ,\nu _8 ; \kappa _1 ,\kappa _2 ; f ,g ) $ up to the equivalence.
\end{thm}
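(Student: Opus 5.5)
The proof will follow the pattern of Theorem \ref{thm:E6compati}: check each generator of $\widetilde{W}(D^{(1)}_5)$ separately. By assumption there are $a,b,c,d \in \C(\nu_1,\dots,\nu_8,\kappa_1,\kappa_2,f,g)\setminus\{0\}$ with
\[
(\nu'_1,\dots,\nu'_8;\kappa'_1,\kappa'_2;f',g')=(a\nu_1,a\nu_2,b\nu_3,b\nu_4,c\nu_5,c\nu_6,d\nu_7,d\nu_8;bd\kappa_1,ac\kappa_2;bf,g/a).
\]
For a generator $w$, I will compute $w(\nu'_i)=w(\cdot)\,w(\nu_i)$ and similarly for $\kappa'_j$, $f'$, $g'$. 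Each result will then be rewritten in terms of the primed variables. The goal is to exhibit new scalars $\tilde a,\tilde b,\tilde c,\tilde d$, built from $w(a),\dots,w(d)$ and $a,\dots,d$, such that the tuple $w(\cdot')$ equals the formula for $w$ applied to the primed tuple, rescaled by $\tilde a,\dots,\tilde d$ in the pattern of (\ref{eq:equivD5}).

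\textbf{Simple transpositions $s_0,s_1,s_4,s_5$.} These only swap $\nu$'s inside a block that shares a common scaling factor. So $\tilde a=s_i(a)/a$, and likewise for $b,c,d$, works immediately.

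\textbf{The reflections $s_2$ and $s_3$.} For $s_2$ the key point is that $\kappa'_1/\nu'_7=b\kappa_1/\nu_7$ and $\kappa'_1/\nu'_3=d\kappa_1/\nu_3$. This matches the exchange $\nu_3\leftrightarrow\kappa_1/\nu_7$ between the $b$ and $d$ blocks. The new $\kappa'_2$, namely $ac\kappa_1\kappa_2/(\nu_3\nu_7)$, keeps the factor $ac$. The factor multiplying $g$ is
\[
\frac{f'-\nu'_3}{f'-\kappa'_1/\nu'_7}=\frac{f-\nu_3}{f-\kappa_1/\nu_7},
\]
which is homogeneous of degree $0$ in $b$. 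Hence $\tilde a=s_2(a)/a$, $\tilde b=s_2(b)/b$, etc., works. The case $s_3$ is symmetric: the blocks $a,c$ exchange via $\kappa_2/\nu_5$, $\kappa_2/\nu_1$, and the factor $(g-1/\nu_1)/(g-\nu_5/\kappa_2)$ is homogeneous of degree $0$ in $1/a$.

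\textbf{The Dynkin automorphisms $\pi_1$ and $\pi_2$.} These permute and invert the blocks, so the new scalars mix factors. For $\pi_1$: $\nu_1,\nu_2\to1/\nu_1,1/\nu_2$ gives $\tilde a=\pi_1(a)a$. Also $\nu_3\to1/\nu_7$ gives $\tilde b=\pi_1(b)d$, $\nu_7\to 1/\nu_3$ gives $\tilde d=\pi_1(d)b$, and $\tilde c=\pi_1(c)c$. I will then verify consistency on the remaining variables:
\begin{itemize}
\item $\pi_1(\kappa'_1)=\pi_1(bd)\,bd/\kappa'_1$, which equals $\tilde b\tilde d/\kappa'_1$;
\item $\pi_1(\kappa'_2)=\tilde a\tilde c/\kappa'_2$;
\item $\pi_1(f')=\pi_1(b)f/\kappa_1=\pi_1(b)\,d\,f'/\kappa'_1=\tilde b f'/\kappa'_1$;
\item $\pi_1(g')=\pi_1(1/a)/g=\pi_1(1/a)\,(1/a)/g'=(1/g')/\tilde a$.
\end{itemize}
For $\pi_2$ I take $\tilde a=\pi_2(a)d$, $\tilde b=\pi_2(b)c$, $\tilde c=\pi_2(c)b$, $\tilde d=\pi_2(d)a$. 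The checks then run as follows:
\begin{itemize}
\item $\pi_2(\kappa'_1)=\pi_2(bd)\,ac/\kappa'_2$, matching $\tilde b\tilde d/\kappa'_2$;
\item $\pi_2(f')=\pi_2(b)/(\kappa_2 g)=\pi_2(b)\,c/(\kappa'_2 g')=\tilde b/(\kappa'_2g')$;
\item $\pi_2(g')=\kappa_1/(\pi_2(a)f)=\kappa'_1/(\pi_2(a)\,d\,f')=(\kappa'_1/f')/\tilde a$.
\end{itemize}

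\textbf{Main obstacle.} The delicate step is the automorphisms. There one must choose $\tilde a,\dots,\tilde d$ consistently so that the same four scalars simultaneously fix the $\nu$'s, both $\kappa$'s, $f$ and $g$. The check that $f$ scales by $\tilde b$ and $g$ by $1/\tilde a$ is the real content. I will record the $\pi_1$, $\pi_2$, $s_2$ and $s_3$ computations explicitly, as done for $E^{(1)}_6$, and note that the remaining generators are similar.
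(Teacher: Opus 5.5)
Your proposal is correct and matches the paper's proof. Both check each generator of $\widetilde{W}(D^{(1)}_5)$ separately and rewrite the images of the primed tuple in primed variables, and your rescaling factors are exactly the ones the paper records: $\tilde b=d\,\pi_1(b)$ and $\tilde d=b\,\pi_1(d)$ for $\pi_1$; $\tilde a=d\,\pi_2(a)$ and $\tilde b=c\,\pi_2(b)$ for $\pi_2$; and $w(a)/a,\dots,w(d)/d$ for $s_2,s_3$. Your explicit treatment of $s_0,s_1,s_4,s_5$ simply fills in what the paper leaves as ``similarly.''
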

\begin{proof}
The theorem is shown similarly to Theorem \ref{thm:E6compati}.
By the assumption, the parameters $ \nu '_1 , \nu '_2 , \nu '_3 , \nu '_4 ,\nu '_5 ,\nu '_6 , \nu '_7 ,\nu '_8 ; \kappa '_1 ,\kappa '_2 ; f' ,g' $ are written as the form of equation (\ref{eq:equivD5}).
It is shown that the action of the generators of $\widetilde{W}(D^{(1)}_{5})$ to $(\nu '_1 , \nu '_2 ,$ $ \nu '_3 , \nu '_4 ,\nu '_5 ,\nu '_6 ,$ $ \nu '_7 ,\nu '_8 ; \kappa '_1 ,\kappa '_2 ; f' ,g' ) $ is written as the same form of the action to $(\nu _1 , \nu _2 , \nu _3 ,$ $ \nu _4 ,\nu _5 ,\nu _6 , \nu _7 ,\nu _8 ;$ $ \kappa _1 ,\kappa _2 ; f ,g ) $ up to the equivalence.
We note crucial equations on the actions of some generators of $\widetilde{W}(D^{(1)}_{5})$.
\begin{align}
  & ( \pi _1 (\nu_1), \pi _1 (\nu_2), \pi _1 (\nu_3), \pi _1 (\nu_4), \dots , \pi _1 (\nu_8); \pi _1 (\kappa_1), \pi _1 (\kappa_2); \pi _1 (f), \pi _1 (g) )\\  & = ( \frac{1}{\nu_1},  \frac{1}{\nu_2},  \frac{1}{\nu_7},  \frac{1}{\nu_8}, \frac{1}{\nu_5}, \frac{1}{\nu_6}, \frac{1}{\nu_3}, \frac{1}{\nu_4} ; \frac{1}{\kappa_1},  \frac{1}{\kappa_2}; \frac{f}{\kappa_1},  \frac{1}{g} ), \nonumber \\
  & ( \pi _1 (\nu'_1), \pi _1 (\nu'_2), \pi _1 (\nu'_3), \pi _1 (\nu'_4),  \dots ,  \pi _1 (\nu'_8);\pi _1 (\kappa'_1), \pi _1 (\kappa'_2); \pi _1 (f'), \pi _1 (g') ) \nonumber \\
  & = ( \frac{a \pi _1 (a) }{\nu'_1}, \frac{a \pi _1 (a) }{\nu'_2}, \frac{d \pi _1 (b) }{\nu'_7}, \frac{d \pi _1 (b) }{\nu'_8}, \frac{c \pi _1 (c) }{\nu'_5}, \frac{c \pi _1 (c) }{\nu'_6}, \frac{b \pi _1 (d) }{\nu'_3}, \frac{b \pi _1 (d) }{\nu'_4} ; \nonumber \\
  &  \qquad \frac{bd \pi _1 (bd) }{\kappa'_1}, \frac{ac \pi _1 (ac) }{\kappa'_2}; d \pi _1 (b) \frac{f'}{\kappa'_1}, \frac{1}{a \pi _1 (a)g'}) . \nonumber
\end{align}
\begin{align}
  & ( \pi _2 (\nu_1), \pi _2 (\nu_2), \pi _2 (\nu_3), \pi _2 (\nu_4),  \dots ,  \pi _2 (\nu_8); \pi _2 (\kappa_1), \pi _2 (\kappa_2); \pi _2 (f), \pi _2 (g) )\\
  & = (  \frac{1}{\nu_7},  \frac{1}{\nu_8},  \frac{1}{\nu_5},  \frac{1}{\nu_6},  \frac{1}{\nu_3},  \frac{1}{\nu_4},  \frac{1}{\nu_1}, \frac{1}{\nu_2} ; \frac{1}{\kappa_2}, \frac{1}{\kappa_1}; \frac{1}{\kappa_2 g}, \frac{\kappa_1}{f} ), \nonumber \\
  & ( \pi _2 (\nu'_1), \pi _2 (\nu'_2), \pi _2 (\nu'_3), \pi _2 (\nu'_4),  \dots ,  \pi _2 (\nu'_8);\pi _2 (\kappa'_1), \pi _2 (\kappa'_2); \pi _2 (f'), \pi _2 (g') ) \nonumber \\
  & = ( \frac{d \pi _2 (a) }{\nu '_7}, \frac{d \pi _2 (a) }{\nu '_8}, \frac{c \pi _2 (b) }{\nu '_5}, \frac{c \pi _2 (b) }{\nu '_6}, \frac{b \pi _2 (c) }{\nu '_3}, \frac{b \pi _2 (c) }{\nu '_4}, \frac{a \pi _2 (d) }{\nu '_1}, \frac{a \pi _2 (d) }{\nu '_2}; \nonumber \\
  &  \qquad \frac{ac \pi _2 (bd) }{\kappa '_2}, \frac{bd \pi _2 (ac) }{\kappa '_1}; \frac{c \pi _2 (b) }{\kappa '_2 g'},\frac{1}{ d \pi _2 (a)} \frac{\kappa '_1}{f'} ) . \nonumber
\end{align}
\begin{align}
  & ( s _2 (\nu_1), s _2 (\nu_2), s _2 (\nu_3), s _2 (\nu_4),  \dots ,  s _2 (\nu_8); s _2 (\kappa_1), s _2 (\kappa_2); s _2 (f), s _2 (g) )\\
  & = (\nu_1 , \nu_2 , \frac{\kappa_1}{\nu_7} , \nu_4 ,\nu_5 ,\nu_6 , \frac{\kappa_1}{\nu_3} ,\nu_8 ;\kappa_1 ,\frac{\kappa_1\kappa_2}{\nu_3\nu_7} ; f ,g\, \frac{f - \nu_3}{f - \kappa_1 /\nu_7} ) , \nonumber \\
  & ( s _2 (\nu'_1), s _2 (\nu'_2), s _2 (\nu'_3), s _2 (\nu'_4),  \dots ,  s _2 (\nu'_8); s _2 (\kappa'_1), s _2 (\kappa'_2); s _2 (f'), s _2 (g') ) \nonumber \\
  & = ( \frac{s _2 (a)}{a} \nu '_1 , \frac{s _2 (a)}{a} \nu '_2 , \frac{s _2 (b)}{b} \frac{\kappa '_1}{\nu '_7} , \frac{s _2 (b)}{b} \nu '_4 , \frac{s _2 (c)}{c} \nu '_5 , \frac{s _2 (c)}{c} \nu '_6 , \frac{s _2 (d)}{d} \frac{\kappa '_1}{\nu '_3} , \frac{s _2 (d)}{d} \nu '_8 ; \nonumber \\
  &  \qquad \frac{s _2 (bd)}{bd} \kappa '_1 , \frac{s _2 (ac)}{ac} \frac{\kappa '_1\kappa '_2}{\nu '_3\nu '_7} ; \frac{s _2 (b)}{b} f' , \frac{a}{s _2 (a)} g' \frac{f' - \nu '_3}{f' - \kappa '_1 /\nu '_7} ) . \nonumber 
\end{align}
\begin{align}
  & ( s _3 (\nu_1), s _3 (\nu_2), s _3 (\nu_3), s _3 (\nu_4),  \dots ,  s _3 (\nu_8); s _3 (\kappa_1), s _3 (\kappa_2); s _3 (f), s _3 (g) )\\
  & = (\frac{\kappa_2}{\nu_5} , \nu_2 , \nu_3 , \nu_4 , \frac{\kappa_2}{\nu_1} ,\nu_6 , \nu_7 ,\nu_8 ;\frac{\kappa_1\kappa_2}{\nu_1\nu_5} ,\kappa_2 ; f\, \frac{g - 1/\nu_1}{g - \nu_5/\kappa_2} ,g ) , \nonumber \\ 
  & ( s _3 (\nu'_1), s _3 (\nu'_2), s _3 (\nu'_3), s _3 (\nu'_4),  \dots ,  s _3 (\nu'_8); s _3 (\kappa'_1), s _3 (\kappa'_2); s _3 (f'), s _3 (g') ) \nonumber \\
  & = ( \frac{s _3 (a)}{a} \frac{\kappa '_2}{\nu '_5 }, \frac{s _3 (a)}{a} \nu '_2 , \frac{s _3 (b)}{b} \nu '_3 , \frac{s _3 (b)}{b} \nu '_4 , \frac{s _3 (c)}{c} \frac{\kappa '_2}{\nu '_1} , \frac{s _3 (c)}{c} \nu '_6 , \frac{s _3 (d)}{d} \nu '_7 , \frac{s _3 (d)}{d} \nu '_8 ; \nonumber \\
  &  \qquad \frac{s _3 (bd)}{bd} \frac{\kappa '_1\kappa '_2}{\nu '_1\nu '_5} , \frac{s _3 (ac)}{ac} \kappa '_2 ; \frac{s _3 (b)}{b} f' \frac{g' - 1/\nu '_1}{g' - \nu '_5/\kappa '_2} , \frac{a}{s _3 (a)} g' ) . \nonumber 
\end{align}
\end{proof}

The difference between the time evolution of $q$-$P(D^{(1)}_5)$ and the particular parallel translation of $\widetilde{W}(D^{(1)}_{5})$ was described by $\Xi $ in equation (\ref{eq:D5Xi}), and we have
\begin{align}
& ( \Xi (\nu_1) , \Xi (\nu_2 ) , \Xi (\nu_3 ) , \Xi (\nu_4 ) , \Xi (\nu_5 ) , \Xi (\nu_6 ) , \Xi (\nu_7 ) , \Xi (\nu_8 ) ; \Xi (\kappa_1) , \Xi (\kappa_2 ) ; \Xi (f ) , \Xi (g ) )  \\ 
& \sim (\nu_1 , \nu_2 , \nu_3 , \nu_4 ,\nu_5 ,\nu_6 , \nu_7 ,\nu_8 ;\kappa_1 ,\kappa_2 ; f ,g ) \nonumber
\end{align}
by the equivalence in equation  (\ref{eq:equivD5}).
Namely, we have
\begin{prop}
The transformation $\Xi $ in Theorem \ref{thm:D5TW} is within the equivalence of the parameters in equation (\ref{eq:equivD5}).
\end{prop}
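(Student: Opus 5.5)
The plan is to exhibit explicit nonzero rational functions $a,b,c,d\in\C(\nu_1,\dots,\nu_8,\kappa_1,\kappa_2,f,g)$ such that the image tuple of $\Xi$ in equation (\ref{eq:D5Xi}) has exactly the form $(a\nu_1,a\nu_2,b\nu_3,b\nu_4,c\nu_5,c\nu_6,d\nu_7,d\nu_8;bd\kappa_1,ac\kappa_2;bf,g/a)$ required by the equivalence (\ref{eq:equivD5}). The scalings attached to $f$ and $g$ single out $a$ and $b$: from $\Xi(f)=f\kappa_1/(q\nu_3\nu_4)$ and $\Xi(g)=g\kappa_2/(\nu_5\nu_6)$ we must take
\begin{align}
b=\frac{\kappa_1}{q\nu_3\nu_4},\qquad a=\frac{\nu_5\nu_6}{\kappa_2}. \nonumber
\end{align}
With this $a$ the first two entries $\Xi(\nu_1)=a\nu_1$ and $\Xi(\nu_2)=a\nu_2$ match at once. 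For the third and fourth entries, $\Xi(\nu_3)=\kappa_1/(q\nu_4)=b\nu_3$ and $\Xi(\nu_4)=\kappa_1/(q\nu_3)=b\nu_4$, so the swap of indices in $\Xi$ is absorbed by the common factor.

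Next I determine $c$ and $d$ from the remaining $\nu$'s:
\begin{align}
c=\frac{\nu_1\nu_2}{\kappa_2},\qquad d=\frac{\kappa_1}{q\nu_7\nu_8}. \nonumber
\end{align}
These give $\Xi(\nu_5)=c\nu_5$, $\Xi(\nu_6)=c\nu_6$, $\Xi(\nu_7)=\kappa_1/(q\nu_8)=d\nu_7$ and $\Xi(\nu_8)=d\nu_8$. It then remains to check the two $\kappa$ entries. First, $bd\kappa_1=\kappa_1^3/(q^2\nu_3\nu_4\nu_7\nu_8)$, which agrees with $\Xi(\kappa_1)$. Second, $ac\kappa_2=\nu_1\nu_2\nu_5\nu_6/\kappa_2$, which agrees with $\Xi(\kappa_2)$. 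All four functions are nonzero monomials, so they lie in $\C(\nu_1,\dots,\kappa_2,f,g)\setminus\{0\}$.

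The only delicate point is consistency: the equivalence has four free scalings but twelve entries, so the $\kappa$ entries and the $f,g$ entries are genuine constraints. That is why I fix $a,b$ from $f,g$ first and $c,d$ from the $\nu$'s, and verify the $\kappa$'s last. One might also worry about compatibility with the relation $\kappa_1^2\kappa_2^2=q\nu_1\cdots\nu_8$. Under the equivalence both sides scale by $a^2b^2c^2d^2$, so the relation is preserved automatically, and no further check is needed. This completes the verification that $\Xi$ lies within the equivalence (\ref{eq:equivD5}).
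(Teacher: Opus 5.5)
Your proposal is correct and follows the paper's approach: the paper simply asserts that the image of $\Xi$ is equivalent to the original tuple under (\ref{eq:equivD5}), and you supply the witnesses $a=\nu_5\nu_6/\kappa_2$, $b=\kappa_1/(q\nu_3\nu_4)$, $c=\nu_1\nu_2/\kappa_2$, $d=\kappa_1/(q\nu_7\nu_8)$. These satisfy all twelve entries, including $bd\kappa_1$ and $ac\kappa_2$, which the paper leaves implicit for $D^{(1)}_5$ but writes out the analogous witnesses for $E^{(1)}_6$ and $E^{(1)}_7$.
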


\subsection{The case $E^{(1)}_8 $} \label{sec:E18}
In connection with the representation of the extended affine Weyl group $\widetilde{W}(E^{(1)}_{8})$ given in equation (\ref{eq:AffineWeylActionE8}), we introduce an equivalence of the parameters as follows.
\begin{align}
&  (\nu '_1 , \nu '_2 , \nu '_3 , \nu '_4 ,\nu '_5 ,\nu '_6 , \nu '_7 ,\nu '_8 ; \kappa '_1 ,\kappa '_2 ; f' ,g' ) \sim  (\nu_1 , \nu_2 , \nu_3 , \nu_4 ,\nu_5 ,\nu_6 , \nu_7 ,\nu_8 ;\kappa_1 ,\kappa_2 ; f ,g ) \label{eq:equivE8} \\
& \Leftrightarrow \mbox{There exist } c \in \C (\nu_1 , \nu_2 , \cdots ,\nu_8 ,\kappa_1 ,\kappa_2 , f ,g ) \setminus \{ 0 \} \mbox{ such that }  \nonumber \\
& \qquad (\nu '_1 , \nu '_2 , \nu '_3 , \nu '_4 ,\nu '_5 ,\nu '_6 , \nu '_7 ,\nu '_8 ; \kappa '_1 ,\kappa '_2 ; f' ,g' )  \nonumber \\
& \qquad \qquad =  (c \nu _1, c \nu _ 2, c\nu_3 , c \nu_4 , c \nu_5, c \nu_6 , c \nu_7, c \nu_8 ; c^2 \kappa_1 , c^2 \kappa_2 ; cf , c g )  .  \nonumber
\end{align}
This equivalence is related to equation (\ref{eq:rT1act}) on the transformation $T_1$.
We have the following statement on the representation of $\widetilde{W}(E^{(1)}_{8})$ in equation (\ref{eq:AffineWeylActionE8}) and the equivalence, which is proved similarly to Theorem \ref{thm:E6compati}.
\begin{thm}
Let $(\nu '_1 , \nu '_2 , \nu '_3 , \nu '_4 ,\nu '_5 ,\nu '_6 , \nu '_7 ,\nu '_8 ; \kappa '_1 ,\kappa '_2 ; f' ,g' ) $ be a tuple of the parameters which is equivalent to $ (\nu_1 , \nu_2 , \nu_3 , \nu_4 ,\nu_5 ,\nu_6 , \nu_7 ,\nu_8 ;\kappa_1 ,\kappa_2 ; f ,g )$.
The actions of the generators of  $\widetilde{W}(E^{(1)}_8)$ to $(\nu '_1 , \nu '_2 , \nu '_3 , \nu '_4 ,\nu '_5 ,\nu '_6 , \nu '_7 ,\nu '_8 ; \kappa '_1 ,\kappa '_2 ; f' ,g' ) $ are written as the same form of the actions to $(\nu _1 , \nu _2 , \nu _3 , \nu _4 ,\nu _5 ,\nu _6 , \nu _7 ,\nu _8 ; \kappa _1 ,\kappa _2 ; f ,g ) $ up to the equivalence.
\end{thm}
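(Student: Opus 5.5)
The plan follows the proof of Theorem \ref{thm:E6compati}. Write the primed tuple as $(c\nu_1,\dots,c\nu_8;c^2\kappa_1,c^2\kappa_2;cf,cg)$ with $c\neq 0$ a rational function. For each generator $w\in\{s_0,\dots,s_8\}$ we compute $w$ of every primed variable using multiplicativity, e.g.\ $w(\nu'_i)=w(c)\,w(\nu_i)$. The goal is to rewrite each result in terms of the primed variables, in the same form as the unprimed action, multiplied by a common scalar. We expect that scalar to be $c'=w(c)/c$ on the $\nu$'s, $f$ and $g$, and $c'^2$ on the $\kappa$'s. This is exactly the equivalence (\ref{eq:equivE8}) with $c$ replaced by $c'$. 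It suffices to treat generators, since the relation is then preserved under composition: applying $w$ to an equivalent tuple and using that $w$ acts as a field automorphism gives equivalence again.

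For $s_0,s_3,\dots,s_8$ the action is only a transposition $\nu_i\leftrightarrow\nu_{i+1}$. Then $w(\nu'_i)=w(c)\nu_{i+1}=(w(c)/c)\,\nu'_{i+1}$, and similarly for the other variables, so the claim is immediate. For $s_1$, which swaps $\kappa_1\leftrightarrow\kappa_2$ and $f\leftrightarrow g$, the argument is the same, because $f,g$ and $\kappa_1,\kappa_2$ carry the same weights $c$ and $c^2$ respectively. This is why, in contrast with the $E^{(1)}_6$ case, a single scaling parameter is compatible with $s_1$.

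The main step is $s_2$. Assign weight $1$ to each $\nu_i,f,g$ and weight $2$ to each $\kappa_j$. First, $\nu_1\mapsto\kappa_2/\nu_2$ and $\kappa_1\mapsto\kappa_1\kappa_2/(\nu_1\nu_2)$ are homogeneous of degrees $1$ and $2$. For $s_2(f)$ we check that the numerator is homogeneous of degree $5$, term by term:
\begin{itemize}
\item $\nu_1\nu_2(\kappa_1-\kappa_2)fg$ has degree $2+2+2=6$?
\end{itemize}
This needs care. Recount with $\nu_1\nu_2$ of degree $2$, $\kappa$ of degree $2$ and $fg$ of degree $2$: the term has degree $6$. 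The term $\kappa_2(\nu_1+\nu_2)(\nu_1\nu_2-\kappa_1)f$ has degree $2+1+2+1=6$. The term $\kappa_1(\nu_1+\nu_2)(\nu_1\nu_2-\kappa_2)g$ has degree $6$. The term $(\kappa_1-\kappa_2)(\nu_1\nu_2-\kappa_1)(\nu_1\nu_2-\kappa_2)$ has degree $6$. The denominator $\nu_1\nu_2(\nu_1\nu_2-\kappa_2)f$ has degree $5$, and its other two terms also have degree $5$. Hence $s_2(f)$ is homogeneous of degree $1$ in the weighted sense. Substituting primed variables therefore gives $c^{-1}$ times the same expression in unprimed variables. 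Combined with the prefactor $s_2(c)$, this yields the factor $s_2(c)/c$ uniformly, as required.

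The expected obstacle is exactly this homogeneity bookkeeping for $s_2(f)$. In particular, the non-monomial relation $\kappa_1^2\kappa_2^2=q\prod\nu_i$ must also be compatible: both sides have weight $8$, which forces the weight of $\kappa$ to be $2$, so $q$ is unchanged.
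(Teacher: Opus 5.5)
Your proposal is correct and matches the paper's approach: the paper only says the $E^{(1)}_8$ case is proved like Theorem~\ref{thm:E6compati}, i.e.\ by checking each generator $s_0,\dots,s_8$ directly, and your weighted-homogeneity bookkeeping (weight $1$ for $\nu_i,f,g$, weight $2$ for $\kappa_j$, new scaling factor $s_i(c)/c$) is that same check done efficiently. Delete the stray opening claim that the numerator of $s_2(f)$ has degree $5$: as your recount shows, the numerator has weighted degree $6$ and the denominator degree $5$, so $s_2(f)$ has degree $1$.
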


The difference between the time evolution of $q$-$P(E^{(1)}_8)$ and the particular parallel translation $T_1$ of $\widetilde{W}(E^{(1)}_{8})$ was described by $\Xi $ in equation (\ref{eq:E8Xi}), and we have
\begin{align}
& ( \Xi (\nu_1) , \Xi (\nu_2 ) , \Xi (\nu_3 ) , \Xi (\nu_4 ) , \Xi (\nu_5 ) , \Xi (\nu_6 ) , \Xi (\nu_7 ) , \Xi (\nu_8 ) ; \Xi (\kappa_1) , \Xi (\kappa_2 ) ; \Xi (f ) , \Xi (g ) )  \\ 
& \sim (\nu_1 , \nu_2 , \nu_3 , \nu_4 ,\nu_5 ,\nu_6 , \nu_7 ,\nu_8 ;\kappa_1 ,\kappa_2 ; f ,g ) \nonumber
\end{align}
by the equivalence in equation (\ref{eq:equivE8}).
Namely, we have
\begin{prop}
The transformation $\Xi $ in Theorem \ref{thm:E8TW} is within the equivalence of the parameters in equation (\ref{eq:equivE8}).
\end{prop}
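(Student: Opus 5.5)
The statement is the last proposition: the transformation $\Xi$ of Theorem \ref{thm:E8TW} lies within the equivalence (\ref{eq:equivE8}). I will exhibit a single $c$ that realizes $\Xi$ in the form required by (\ref{eq:equivE8}). Comparing with (\ref{eq:E8Xi}), the natural choice is
\begin{equation*}
c=\frac{\kappa_1}{q\kappa_2}\in \C (\nu_1 , \dots ,\nu_8 ,\kappa_1 ,\kappa_2 , f ,g ) \setminus \{ 0 \}.
\end{equation*}
This $c$ is just $v^{-1}$, where $v=q\kappa_2/\kappa_1$ is the rescaling factor appearing in (\ref{eq:rT1act}).

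The verification then proceeds component by component.
\begin{itemize}
\item For the eight parameters $\nu_i$, we have $\Xi(\nu_i)=\frac{\kappa_1}{q\kappa_2}\nu_i=c\nu_i$.
\item For the two parameters $\kappa_j$, we have $\Xi(\kappa_j)=\frac{\kappa_1^2}{q^2\kappa_2^2}\kappa_j=c^2\kappa_j$.
\item For the two principal variables, $\Xi(f)=cf$ and $\Xi(g)=cg$.
\end{itemize}
The last item matches (\ref{eq:equivE8}), which scales both $f$ and $g$ by $c$. This differs from the $E^{(1)}_6$ and $E^{(1)}_7$ cases, where $g$ is scaled by $1/a$. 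The $E^{(1)}_8$ formula (\ref{eq:E8Xi}) indeed has $\frac{\kappa_1}{q\kappa_2}g$, in contrast with (\ref{eq:E7Xi}), so the $g$-component is consistent. Hence the tuple $(\Xi(\nu_1),\dots,\Xi(g))$ equals $(c\nu_1,\dots,c\nu_8;c^2\kappa_1,c^2\kappa_2;cf,cg)$, which is equivalent to $(\nu_1,\dots;f,g)$ by definition.

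I will also check compatibility with the constraint $\kappa_1^2\kappa_2^2=q\nu_1\cdots\nu_8$. The equivalence scales the left side by $c^8$ and the right side by $c^8$ as well, provided $q$ is fixed, so the constraint is preserved.

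The only step needing care is confirming that $\Xi$, defined abstractly as $T\cdot T_1^{-1}$, really has the explicit form (\ref{eq:E8Xi}). This should follow from (\ref{eq:rT1act}) together with (\ref{eq:E8T}). Indeed, $T_1$ multiplies $\nu_i,f,g$ by $v$ (on top of the time evolution) and multiplies $\kappa_j$ by $v^2$, relative to $T$. Inverting, and noting that $v$ transforms under $T$ as $T(v)=q\cdot q\kappa_2/(\kappa_1/q)=q^2 v$, I need to make sure the scalar is expressed in the original variables. I will check that composing in the stated order gives exactly the factor $\kappa_1/(q\kappa_2)$ and not a $q$-shifted version. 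Since Theorem \ref{thm:E8TW} already states (\ref{eq:E8Xi}) explicitly, I may take it as given, and the proposition reduces to the direct comparison above.
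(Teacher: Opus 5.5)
Your proposal is correct and follows the paper's argument: with $c=\kappa_1/(q\kappa_2)=v^{-1}$, the explicit formula (\ref{eq:E8Xi}) is literally $(c\nu_1,\dots,c\nu_8;c^2\kappa_1,c^2\kappa_2;cf,cg)$, which is exactly the form required in (\ref{eq:equivE8}). Your additional checks (preservation of the constraint $\kappa_1^2\kappa_2^2=q\nu_1\cdots\nu_8$, and re-deriving $\Xi$ from $T\cdot T_1^{-1}$) are not needed for the proposition as stated, because $\Xi$ is given by the explicit formula in Theorem \ref{thm:E8TW}.
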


\appendix
\section{A Lax pair of $q$-$P(E^{(1)}_6)$ and symmetry}
It is known that the $q$-Painlev\'e equation $q$-$P(E^{(1)}_6)$ is obtained as the compatibility condition for the Lax pair $L_1$ and $L_2$, where
\begin{align}
    L_{1}=&\frac{ z (g\nu_{1}-1)(g\nu_{2}-1)(g\nu_{3}-1)(g\nu_{4}-1)}{g(fg-1) ( g z - q )}-\frac{( g\kappa_{2}/\nu_{5}-1 )( g\kappa_{2}/\nu_{6}-1 ){\kappa_{1}}^{2}}{q fg \nu_{7}\nu_{8}} \label{eq:E6L1} \\
& +\frac{(\nu_{1}- z/q )(\nu_{2}- z/q )(\nu_{3}- z/q )(\nu_{4}- z/q )}{f- z/q}\Bigl\{\frac{g}{1-g z/q}-T_{z}^{-1}\Bigr\} \nonumber \\
    &+\frac{( \kappa_{1}/\nu_{7}-z )( \kappa_{1}/\nu_{8}-z )}{q(f-z)}\Bigl\{\Bigl(\frac{1}{g}-z\Bigr)-T_{z}\Bigr\} \, , \nonumber  \\
    L_{2}=&\Bigl(1-\frac{f}{z}\Bigr)T+T_{z}-\Bigl(\frac{1}{g}-z\Bigr) \, . \nonumber 
\end{align}
Here, $T_{z}$ represents the transformation $z\mapsto qz$ and $T$ represents the time evolution $T(f)=\overline{f}$.
In \cite{TsqP}, a symmetry of the linear $q$-difference equation $L_1 y(z)= 0$, which is written as
\begin{align}
& \Bigl\{ \frac{ z (g\nu_{1}-1)(g\nu_{2}-1)(g\nu_{3}-1)(g\nu_{4}-1)}{g(fg-1) ( g z - q )}-\frac{( g\kappa_{2}/\nu_{5}-1 )( g\kappa_{2}/\nu_{6}-1 ){\kappa_{1}}^{2}}{q fg \nu_{7}\nu_{8}} \Bigr\} y(z) \label{eq:E6L1yz} \\
& +\frac{(\nu_{1}- z/q )(\nu_{2}- z/q )(\nu_{3}- z/q )(\nu_{4}- z/q )}{f- z/q} \Bigl( \frac{g}{1-g z/q} y(z) -y(z/q) \Bigr) \nonumber \\
& +\frac{( \kappa_{1}/\nu_{7}-z )( \kappa_{1}/\nu_{8}-z )}{q(f-z)}\Bigl( \Bigl(\frac{1}{g}-z\Bigr) y(z)- y(qz) \Bigr) =0 , \nonumber 
\end{align}
was investigated.
Set $z =u/a$, $y(z)= z^d \tilde{y}(u) $.
Then, the function $\tilde{y}(u)$ satisfies
\begin{align}
 & \left\{  \frac{  u (g \nu _{1}-1)(g \nu _{2}-1)(g \nu _{3}-1)(g \nu _{4}-1)}{q (g/a) (fg-1)( u g /(a q ) -1 )}- a^2 \frac{ ( g \kappa_{2} /\nu_{5} -1 ) ( g \kappa_{2}/\nu_{6} - 1) {\kappa_{1}}^{2}}{q f g \nu_{7} \nu_{8}} \right\} \tilde{y}(u) \label{eq:E6tiy}\\
& + \frac{ ( a \nu_{1}- u/q )(a \nu_{2}- u/q )(a \nu_{3}- u/q )(a \nu_{4}- u/q ) }{a f- u/q}\left\{\frac{ g/a }{1-u g/(a q) } \tilde{y}(u) - \frac{\tilde{y}(u/q)}{a q^{d}} \right\} \nonumber \\
  & +\frac{( a \kappa_{1}/\nu_{7} -u )( a \kappa_{1}/\nu_{8} -u )}{q(a f-u)}\left\{\left(\frac{a}{g}-u \right) \tilde{y}(u) - a q^d \tilde{y}(q u) \right\} =0 . \nonumber 
\end{align}
If $a q^d =1$, then equation (\ref{eq:E6tiy}) is written in the form of equation (\ref{eq:E6L1yz}), where the parameters are changed as 
\begin{align}
& (\nu_1 , \nu_2 , \nu_3 , \nu_4 , \nu_5/\kappa_2 ,\nu_6/\kappa_2 , \nu_7/\kappa_1 ,\nu_8/\kappa_1,f,g) \label{eq:E6S} \\
& \mapsto (a  \nu_1 , a \nu_2 , a \nu_3 , a \nu_4 , \nu_5/\kappa_2/a  ,\nu_6/\kappa_2 /a , \nu_7/\kappa_1 /a ,\nu_8/\kappa_1 /a ,af,g/a) . \nonumber
\end{align}
We denote the transformation of the parameters in equation (\ref{eq:E6S}) by $S_{E_6}[a] $.
There are ambiguity for determining $(\nu _5, \nu _6, \kappa _2)$ (resp.~$(\nu _7, \nu _8, \kappa _1)$) from $(\nu_5/\kappa_2 ,\nu_6/\kappa_2  )$ (resp. $(\nu _7/\kappa _1 , \nu _8 /\kappa _1)$).
The definition of the equivalence in equation (\ref{eq:equivE6}) on the case $E^{(1)}_6$ is based on equation (\ref{eq:E6S}) with the ambiguity.
The equivalence in equation (\ref{eq:equivE7}) on $E^{(1)}_7$ (resp.~equation (\ref{eq:equivD5}) on $D^{(1)}_5$) is related to the similar symmetry in \cite[(4.7)]{TsqP} (resp.~\cite[(2.10), (2.12)]{TsqP}).

On the transformation $S_{E_6}[a] $ defined in equation (\ref{eq:E6S}) and the transformation $\Xi $ in equation (\ref{eq:E6Xi}), we have
\begin{equation}
 \Xi (\zeta ) = S_{E_6} \Bigl[  \frac{ \nu_ 5 \nu_ 6 }{\kappa_2 }  \Bigr](\zeta ). \label{eq:XiSE6}
\end{equation}
for $\zeta \in \{ \nu_1, \nu_2, \nu_3 ,  \nu_4, \nu_5 /\kappa _2, \nu_6 /\kappa _2,  \nu_7 /\kappa _1, \nu_8 /\kappa _1, f, g \} $.
There are a mistake in \cite[Theorem 3.1 (ii)]{TsqP},
and the equation (3.14) in \cite{TsqP} should be replaced to equation (\ref{eq:XiSE6}).

\section*{Acknowledgements}
The second author thanks Professor Anton Dzhamay for valuable comments.
He was supported by JSPS KAKENHI Grant Number JP22K03368.

%

\end{document}